\newcommand{\lra}{\longrightarrow}
\newcommand{\C}{{\mathbb{C}}}
\newcommand{\HH}{{\mathbb{H}}}         
\newcommand{\R}{{\mathbb{R}}}
\newcommand{\Z}{{\mathbb{Z}}}          
\newcommand{\Dd}{{\mathcal{D}}}        
\newcommand{\Hh}{{\mathcal{H}}}        
\newcommand{\Ss}{{\mathcal{S}}}        
\newcommand{\Zz}{{\mathcal{Z}}}
\newcommand{\hk}{\mathsf{H}}
\newcommand{\swann}{\mathcal{U}(N)}    
\newcommand{\euler}{\mathcal{X}_{0}}   
\newcommand{\imag}{\mathrm{\mathbf{i}}}
\newcommand{\sst}{\scriptscriptstyle}
\newcommand{\mc}[1]{\mathcal{#1}}
\newcommand{\ms}[1]{\mathsf{#1}}
\newcommand{\scr}[1]{\mathscr{#1}}
\newcommand{\mf}[1]{\mathfrak{#1}}
\newcommand{\mbb}[1]{\mathbb{#1}}
\newcommand{\norm}[1]{\left\| #1 \right\|}
\newcommand{\abs}[1]{\left\lvert #1 \right\rvert}
\newcommand{\pair}[1]{\left\langle #1 \right\rangle}
\newcommand{\eqst}[1]{\begin{equation*} #1 
                      \end{equation*}}
\newcommand{\eq}[1]{\begin{equation} #1
                    \end{equation}}
\newcommand{\alst}[1]{\begin{align*} #1
                      \end{align*}}
\newcommand{\al}[1]{\begin{align} #1
                      \end{align}}
\theoremstyle{plain}
\newtheorem{thm}{Theorem}[section]
\newtheorem{lem}[thm]{Lemma}
\newtheorem*{lemma*}{Lemma}
\newtheorem{prop}[thm]{Proposition}
\theoremstyle{definition}
\theoremstyle{definition}
\newtheorem{defn}{Definition}
\newtheorem{rmk}{Remark}
\theoremstyle{remark}
\newtheorem*{question*}{Question}
\DeclareMathOperator{\id}{id}
\DeclareMathOperator{\End}{End}
\DeclareMathOperator{\aut}{Aut}
\DeclareMathOperator{\Map}{Map}
\DeclareMathOperator{\Hom}{Hom}
\DeclareMathOperator{\pr}{proj}
\DeclareMathOperator{\im}{Im}
\DeclareMathOperator{\grad}{grad}
\begin{document}

\title[Generalised Seiberg-Witten equations and almost-Hermitian geometry]{Generalised Seiberg-Witten equations and almost-Hermitian geometry}

\author[V. Thakre]{Varun Thakre}

\address{International Centre for Theoretical Sciences (ICTS-TIFR), Hesaraghatta, Hobli, Bengaluru 560089, India}

\email{varun.thakre@icts.res.in}

\subjclass[2010]{Primary 53C26,	53B35}

\date{Revised on \today }

\keywords{Spinor, four-manifold, hyperKahler manifolds, generalised seiberg-witten, almost-complex geometry}

\begin{abstract}

In this article, we study a generalisation of the Seiberg-Witten equations, replacing the spinor representation with a hyperK\"ahler manifold equipped with certain symmetries. Central to this is the construction of a (non-linear) Dirac operator acting on the sections of the non-linear fibre-bundle. For hyperK\"ahler manifolds admitting a hyperK\"ahler potential, we derive a transformation formula for the Dirac operator under the conformal change of metric on the base manifold.

As an application, we show that when the hyperK\"ahler manifold is of dimension four, then away from a singular set, the equations can be expressed as a second order PDE in terms of almost-complex structure on the base manifold and a conformal factor. This extends a result of Donaldson to generalised Seiberg-Witten equations.

\end{abstract}

\maketitle

\section{Introduction}

Let $X$ be a 4-dimensional, oriented, smooth, Riemannian manifold and let $Q \rightarrow X$ be a ${\rm Spin}$-structure. A spinor bundle over $X$ is a vector bundle associated to $Q$, with typical fibre $\HH$. The idea for generalisation is to replace the spinor representation with a hyperK\"ahler manifold $(M, g_{\sst M}, I_1, I_2, I_3)$ equipped with an isometric action of ${\rm Sp}(1)$ (or ${\rm SO}(3)$) which \emph{permutes} the complex structures on $M$. We will often refer to $M$ as the \emph{target hyperK\"ahler manifold}. The sections of the non-linear fibre-bundle now play the role of spinors. The interplay between the ${\rm Sp}(1)$ (or ${\rm SO}(3)$) action and the quaternionic structure on $M$ allows one to define the Clifford multiplication. Composing the Clifford multiplication with the covariant derivative gives the generalised Dirac operator, which we denote by $\Dd$.

In order to define a generalisation of the Seiberg-Witten equations, we need additionally a twisting principal $G$-bundle $P_G \rightarrow X$, with a tri-Hamiltonian action of $G$ on $M$. The action gives rise to a hyperK\"ahler moment map $\mu: M \longrightarrow \mf{sp}(1)^{\ast} \otimes \mf{g}^{\ast}$. For a connection $A$ on $P_G$ and a spinor $u$ the 4-dimensional generalised Seiberg-Witten equations on $X$ are the following system of equations
\eq{
\label{eq: intro gen. sw on X}
\left\{\begin{array}{l}
\Dd_{A} u = 0 \\
F^{+}_{A} - \mu \circ u = 0
\end{array}\right.
}

where $\Dd_{A}$ is a twisted Dirac operator for a connection $A$ on $P_G$.

This non-linear generalisation of the Dirac operator is well-known to physicists and has been used in the study of gauged, non-linear $\sigma$-models \cite{anselmi-fre}. The 3-dimensional version of equations \eqref{eq: intro gen. sw on X} was studied by Taubes \cite{taubes} (see also \cite{martin}). The 4-dimensional generalisation was considered by Pidstrygach \cite{victor}, Schumacher \cite{henrik} and Haydys \cite{haydys}. The moduli spaces of solutions to \eqref{eq: intro gen. sw on X} makes for an interesting study, especially because of its application to gauge theories on manifolds with special holonomies (cf. \cite{haydys15}, \cite{witten11}). Many well-known gauge-theoretic equations like the ${\rm PU}(2)$-monopole equations \cite{feehan-leness1998}, the Vafa-Witten equations \cite{vafa-witten1994}, ${\rm Pin}(2)$-monopole equations \cite{manolescu2016}, the non-Abelian monopole equations \cite{teleman00}, etc. can be treated as special cases of this generalisation.

It is possible to obtain the target hyperK\"ahler manifold with requisite symmetries from \emph{Swann's construction} \cite{swann}, \cite{boyergalickimann2}. Starting with a quaternionic K\"ahler manifold $N$ of positive scalar curvature, Swann constructs a fibration $\swann \rightarrow N$, whose total space admits a hyperK\"ahler structure. Such manifolds are characterised by the existence of a hyperK\"ahler potential. Alternatively, the permuting ${\rm Sp}(1)$-action extends to a homothetic action of $\HH^{\ast}$. The bundle construction commutes with the hyperK\"ahler quotient construction of Hitchin, Karlhede, Lindstr\"om and Ro\v cek \cite{hklr} and the quaternionic K\"ahler quotient construction of Galicki and Lawson \cite{galilaw}. As a result, many examples of (finite dimensional) hyperK\"ahler manifolds with homothetic $\HH^{\ast}$-action can be obtained via hyperK\"ahler reduction of $\HH^{n}$.

With $M = \swann$, we derive a transformation formula for the generalised Dirac operator, under the conformal change of metric on the base manifold. Since $\swann$ admits a natural homothetic action of $\R^+$, this setting allows one to make sense of ``weighted spinors".

Let $\pi_1: P_{{\rm CO}(4)} \rightarrow X$ be the bundle of conformal frames with respect to the conformal class $[g_{\sst X}]$ and $P_G \rightarrow X$ be a principal $G$-bundle over $X$. Assume that the action of $G$ on $M$ is tri-Hamiltonian. Let $\widetilde{\pi}: \widetilde{Q} \rightarrow X$ denote the conformal ${\rm Spin}^G(4)$-bundle, which is a double cover of $P_{{\rm CO}(4)} \times_X P_G$.

\begin{thm} \label{thm: main thm 1}
Let $f$ be a smooth, real-valued function on $X$ and let $u$ be a (generalised) spinor. Consider the metric $g'_{\sst X} := e^{2f}g_{\sst X}$ in the conformal class $[g_{\sst X}]$ and let $\varphi'$ and $\varphi$ be the Levi-Civita connections associated to $g_{\sst X}$ and $g'_{\sst X}$ respectively. For a fixed connection $A$ on $P_G$, denote by $A_{\varphi}$ and $A_{\varphi'}$ the corresponding lifts to $\widetilde{Q}$. Then, the associated generalised Dirac operators $\Dd_{A_{\varphi}}$ and $\Dd_{A_{\varphi'}}$ are related as
\eq{
\Dd_{A_{\varphi'}} (\scr{B}u) = \scr{B} \Big( de^{-5/2\pi_1^{\ast}f}\Dd_{A_{\varphi}}(e^{3/2\pi_1^{\ast}f}u) \Big)
}
where, $\scr{B}$ is the lift of the automorphism $ B: P_{{\rm CO}(4)} \longrightarrow P_{{\rm CO}(4)}$, given by $p \longmapsto e^{-f} p,$ and $de^{-5/2\pi_1^{\ast}f}$ is the action of $e^{-5/2\pi_1^{\ast}f}$ by differential on $TM$.
\end{thm}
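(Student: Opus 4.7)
The strategy is to adapt Hitchin's classical proof of the conformal covariance of the Dirac operator to the nonlinear target $M = \swann$, exploiting the homothetic $\R^{+}$-action coming from Swann's construction. Three ingredients enter: the standard conformal change formula for the Levi-Civita connection on $TX$; the lift of the frame rescaling $B: p \longmapsto e^{-f}p$ to the conformal ${\rm Spin}^G$-bundle $\widetilde{Q}$; and the interpretation of the scalar multiplication $u \longmapsto e^{3/2\pi_1^{\ast}f} u$ as the flow of the Euler vector field $\euler$ on $M$, whose differential on $TM$ is precisely $de^{-5/2\pi_1^{\ast}f}$ (up to matching exponents).

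First I would factor $\Dd_A u = \gamma \circ \nabla_A u$, where $\nabla_A u \in \Gamma(T^{\ast}X \otimes u^{\ast}VM)$ (with $VM$ the vertical tangent bundle of the associated fibre bundle) and $\gamma$ is the Clifford multiplication built from the metric and the permuting ${\rm Sp}(1)$-action on $M$. The metric enters $\gamma$ only through the dualisation $TX \to T^{\ast}X$; rescaling frames by $e^{-f}$ therefore pulls out an overall factor $e^{-f}$ from $\gamma$. Since $3/2 - 5/2 = -1$, this single power of $e^{-f}$ accounts for the gap between the exponents on the two sides of the identity, exactly as in the $(n-1)/2$ versus $(n+1)/2$ weights of the classical formula in dimension $n=4$.

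Next I would compute the two remaining contributions. On the one hand, the conformal change of the Levi-Civita connection is
\eqst{ \varphi' = \varphi + \omega_f, \qquad \omega_f(Y) \cdot Z = df(Y) Z + df(Z) Y - g_{\sst X}(Y,Z)\, \grad f, }
which, lifted to $\widetilde{Q}$, shifts $\nabla_{A_{\varphi'}}$ by a term linear in $df$. On the other hand, using the identification of scaling with the $\R^{+}$-flow on $M$, a chain-rule computation gives
\eqst{ \nabla_{A_{\varphi}}\bigl(e^{3/2\pi_1^{\ast}f} u\bigr) = e^{3/2\pi_1^{\ast}f}\nabla_{A_{\varphi}}u + \tfrac{3}{2}\,df \otimes \euler|_{e^{3/2\pi_1^{\ast}f}u}. }
Applying $\gamma$ to the two $df$-pieces, the contribution from $\omega_f$ (after using the standard identity $\sum_i e_i \cdot e_i \cdot (\cdot) + 2 \grad f \cdot df(\cdot) - $ trace terms) must cancel with $\tfrac{3}{2}$ times the Clifford multiplication of $df \otimes \euler$. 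Pulling the residual $\mathscr{B}$ and $de^{-5/2\pi_1^{\ast}f}$ through the (now purely algebraic) Clifford multiplication then yields the stated formula.

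The main obstacle will be the bookkeeping of this cancellation in the nonlinear setting. Classically the $df$-terms cancel by a short identity in the spin representation; here that identity has to be replaced by a compatibility between Clifford multiplication on $T^{\ast}X \otimes u^{\ast}VM$ and the derivative of the $\R^{+}$-homothety on $M$. The key fact to verify is that $\gamma\bigl(df \otimes \euler|_u\bigr)$ equals (up to the constant $\tfrac{3}{2}$) the Clifford action of the $\omega_f$-correction applied to $u$; this is where the hyperK\"ahler potential structure of $\swann$, the permuting ${\rm Sp}(1)$-action and the fact that $\euler$ is a gradient vector field for the potential are all needed, and where the dimension-four exponents $3/2$ and $-5/2$ are fixed.
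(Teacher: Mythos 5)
Your proposal is correct and follows essentially the same route as the paper: the conformal correction $\varphi'-\varphi$ (your $\omega_f$, which matches the paper's $\alpha = \pi_1^{\ast}df + \langle\sum_{i,j} f_i\, e^j\otimes(e^i\wedge e^j),\theta\rangle$) is converted into vector fields on $M$ via the fundamental-vector-field identities $K^{M,\R^{+}}_{\lambda}=\lambda\euler$ and $K^{M}_{\xi}=I_{\xi}\euler$, the chain-rule identity for $\Dd_{A}(e^{\lambda\pi_1^{\ast}f}u)$ (the paper's Lemma~\ref{gendirac derivative property}) produces the $\tfrac{3}{2}\,\pi_1^{\ast}df\bullet\euler$ term, and the rescaled Clifford multiplication $\bullet'=de^{-\pi_1^{\ast}f}\bullet$ supplies the remaining power of $e^{-f}$ exactly as you say. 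The ``key fact'' you defer is precisely the paper's computation $K^{M}_{(e^{i}e^{j}-e^{j}e^{i})}|_{u}=(e^{i}e^{j}-e^{j}e^{i})\bullet\euler\circ u$ followed by the standard Clifford contraction yielding the factor $\tfrac{3}{2}$, so what remains in your sketch is routine verification rather than a genuine gap.
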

For $M=\HH$, the result was proved by Hitchin \cite{hitchin1974}.

Assume that $M = \swann$ is a 4-dimensional hyperK\"ahler manifold. Using the above theorem, we show that away from a singular set, the generalised Seiberg-Witten equations can be interpreted in terms of almost-complex geometry of the underlying 4-manifold, as equations for a compatible almost-complex structure and a real-valued function which is associated to a conformal factor. Recall that on a Riemannian 4-manifold $(X, g_{\sst X})$, the compatible almost-complex structures on $X$ are parametrized by sections of the twistor bundle $\Zz$, which is a sphere bundle in $\Lambda^+$. Thus the almost-complex structures can be thought of as self-dual, 2-forms $\Omega$ with $\abs{\Omega} = 1$. An almost-complex structure gives a splitting of $\Lambda^+$ into the direct sum of the trivial bundle spanned by $\Omega$ and its orthogonal complement $\overline{K}$, where $K$ is a complex line bundle. Since $\abs{\Omega} = 1$, its covariant derivative is a section of $T^{\ast}X \otimes_{\R} \overline{K}$. Using the almost-complex structure, we get the isomorphism
\eqst{
T^{\ast}X \otimes_{\R} \overline{K} \cong T^{\ast}X \otimes_{\C} K \oplus T^{\ast}X \otimes_{\C} \overline{K}.
}
Moreover, the wedge product gives a complex, bi-linear map 
\eqst{
T^{\ast}X \times T^{\ast}X \longrightarrow \Lambda^2 T^{\ast}X = K.
}
using which, we can identify $TX \cong T^{\ast}X \otimes_{\C} \overline{K}$. 
Thus $\nabla \Omega$ has two components: the first component in $T^{\ast}X \otimes_{\C} K$ is the Nijenhuis tensor and the second one in $TX$ is $d\Omega$. Let $\langle \cdot, \cdot \rangle$ denote the obvious $\overline{K}$-valued pairing between $TX$ and $T^{\ast}X \otimes \overline{K}$.

Let $G = {\rm U}(1)$ and $M = \swann$ be 4-dimensional hyperK\"ahler manifold, which is total space of a Swann bundle, equipped with a tri-Hamiltonian action of ${\rm U}(1)$ that commutes with the permuting ${\rm Sp}(1)$-action. We will call such an action a permuting action of ${\rm U}(2) \cong {\rm Sp}(1) \times_{\pm} {\rm U}(1)$. 

\begin{thm}
\label{thm: main thm 2}
Fix a metric $g_{\sst X}$ on $X$ and let $[g_{\sst X}]$ be its conformal class. Assume that $M$ is obtained as a quotient of a flat, quaternionic space and equipped with a residual permuting action of ${\rm U}(2)$ from the flat space. Then, there exists a 1-1 correspondence between the following:
\begin{itemize}
\item pairs consisting of a metric $g'_{\sst X} \in [g_{\sst X}]$ and a solution $(u, \ms{A})$ to the generalised Seiberg-Witten equations, such that the image of $u$ does not contain a fixed point of the ${\rm U}(1)$ action on $M$
\item pairs consisting of a metric $g''_{\sst X} \in [g_{\sst X}]$ and a self-dual 2-form $\Omega$ satisfying
\eq{
\label{eq: main thm 2}
(\nabla^{\ast}\nabla \Omega)^{\perp} + 2\pair{d\Omega, N_{\Omega}} = 0,~~~  
\frac{3}{2}\abs{N_{\Omega}}^2 + \frac{1}{2}\abs{d\Omega}^2 + \frac{1}{2}\, s_{\sst X}(g''_{\sst X}) < 0 
}
where $s_{\sst X}(g''_{\sst X})$ denotes the scalar curvature with respect to the metric $g''_{\sst X}$.
\end{itemize}
\end{thm}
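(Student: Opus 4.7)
The plan is to use the homothetic $\mathbb{R}^{+}$-action built into the Swann bundle $M$ to split a spinor $u: X \to M$ (whose image avoids the $\mathrm{U}(1)$-fixed locus) into a ``radial'' conformal factor and a normalized angular part valued in a single $\mathrm{Sp}(1)$-orbit. Since $M$ is $4$-dimensional and obtained as a hyperK\"ahler quotient of flat quaternionic space with residual permuting $\mathrm{U}(2)$-action, the $\mathrm{U}(1)$-fixed locus coincides with the zero set of the hyperK\"ahler potential $\rho$; thus, away from this locus, $u$ factorizes uniquely as $u = e^{f} \cdot \hat{u}$, where $e^{f}$ is an appropriate power of $\rho(u)$ and $\hat{u}$ lies on a single $\mathrm{Sp}(1)$-orbit. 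The normalized $\hat{u}$ projects to a section of the twistor bundle of $X$, i.e.\ a unit self-dual $2$-form $\Omega$.

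I would then apply Theorem \ref{thm: main thm 1} with this $f$ to pass from $g'_{\sst X}$ to $g''_{\sst X} := e^{-2f} g'_{\sst X}$, converting $\Dd_{A_{\varphi'}} u = 0$ into a Dirac equation for $\hat{u}$ with respect to $g''_{\sst X}$. Because $\hat{u}$ is the tautological lift of $\Omega$, the non-linear Dirac operator on $\hat{u}$ decomposes in a frame adapted to $\Omega$ according to the splitting $T^{\ast}X \otimes_{\R} \overline{K} \cong T^{\ast}X \otimes_{\C} K \oplus TX$, yielding two contributions: the Nijenhuis tensor $N_{\Omega}$ and the form $d\Omega$. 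Since the $\mathrm{U}(1)$-moment map in the flat model is quadratic in $u$, the equation $F^{+}_{\mathcal{A}} = \mu \circ u$ determines $\mathcal{A}$ uniquely modulo gauge, and substituting eliminates the connection. The resulting second-order PDE splits under $\Lambda^{+} = \langle \Omega \rangle \oplus \overline{K}$: the component orthogonal to $\Omega$ is tensorial in $\Omega$ and $g''_{\sst X}$ alone, yielding $(\nabla^{\ast}\nabla \Omega)^{\perp} + 2\pair{d\Omega, N_{\Omega}} = 0$, while the component along $\Omega$ becomes a semilinear Laplace-type equation for $f$ of the form $\Delta_{g''_{\sst X}} f + c\, e^{mf} = 0$. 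A Weitzenb\"ock rearrangement for $\nabla^{\ast}\nabla \Omega$ identifies the zeroth-order coefficient $c$ (up to a positive multiple) with $\tfrac{3}{2}\abs{N_{\Omega}}^{2} + \tfrac{1}{2}\abs{d\Omega}^{2} + \tfrac{1}{2}\, s_{\sst X}(g''_{\sst X})$, so existence of a positive conformal factor $e^{f}$ is equivalent to the stated strict inequality.

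The main obstacle is the frame computation identifying the two pieces of $\Dd \hat{u}$ with $N_{\Omega}$ and $d\Omega$ and fixing their numerical coefficients: this requires unpacking the non-linear Clifford action in a basis diagonalizing $\Omega$, separating the infinitesimal $\mathrm{Sp}(1)$-permuting and $\mathrm{U}(1)$-tri-Hamiltonian directions on $M$, and pinning down the constants $\tfrac{3}{2}$ and $\tfrac{1}{2}$ through the Weitzenb\"ock identity together with the normalization fixed by the hyperK\"ahler potential. For the reverse direction, given $(g''_{\sst X}, \Omega)$ satisfying \eqref{eq: main thm 2}, the strict inequality provides solvability (unique up to scale) of the scalar Laplace equation with positive $e^{f}$; this reconstructs $g'_{\sst X} = e^{2f} g''_{\sst X}$, $u = e^{f}\hat{u}$, and $\mathcal{A}$, and reversing the derivation verifies that the resulting pair satisfies \eqref{eq: intro gen. sw on X}.
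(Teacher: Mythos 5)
Your overall scaffolding (normalize the spinor using the homothetic $\R^+$-action, apply Theorem \ref{thm: main thm 1} to absorb the normalization into a conformal change, then read off $N_{\Omega}$ and $d\Omega$ from an adapted-frame computation) matches the paper's strategy, but two of your key mechanisms are wrong, and they are exactly the steps that make the proof work. First, you claim that the curvature equation $F^{+}_{\ms{A}} = \mu\circ u$ ``determines $\ms{A}$ uniquely modulo gauge'' and that substituting it eliminates the connection. It does not: that equation constrains only the self-dual part of the curvature, so even modulo gauge there remains an infinite-dimensional family of compatible connections (add any $\imag\ms{a}$ with $(d\ms{a})^{+}=0$). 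The paper eliminates the connection the other way around: Lemma \ref{lem: 4d unique cont.} shows that the \emph{Dirac} equation determines the ${\rm U}(1)$-part of $\ms{A}$ algebraically and pointwise, because $\Dd_{\ms{A}}u = \Dd_{\ms{A}_0}u + \ms{a}_0\bullet K^{M}_{\imag}|_{u}$ and Clifford multiplication against the non-vanishing section $K^{M}_{\imag}|_{u}$ is pointwise invertible (quaternionic multiplication, using $\dim M = 4$ and the fixed-point-free hypothesis). It is then the \emph{curvature} equation, with this determined connection substituted, that becomes the second-order PDE for $\Omega$.

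Second, no semilinear equation $\Delta_{g''}f + c\,e^{mf} = 0$ for the conformal factor appears, and your claim that solvability of such a Liouville-type equation is equivalent to the pointwise inequality is unjustified (and false in general). In the paper the conformal factor is determined \emph{algebraically}, not by solving a PDE: one rescales $g'' = \abs{\mu\circ\widehat{u}'}^{-4/3}g'$ so that $\abs{\mu\circ\widehat{u}''}=1$, which turns the curvature equation into $F^{+}_{\ms{b}} = \lambda\,\mu\circ\widehat{u}''$ with $\lambda = \abs{\mu\circ u}^{-1} > 0$; then, since $\abs{\Omega}=1$ forces $0 = \Delta\abs{\Omega} = 2\pair{\nabla^{\ast}\nabla\Omega, \Omega} - 2\abs{\nabla\Omega}^2$, the Weitzenb\"ock identity yields $2\lambda = -s_{\sst X}(g''_{\sst X}) - 2\abs{\nabla\Omega}^2 - 2\pair{\widetilde{B}(D_{\widehat{\ms{A}}}\widehat{u}, D_{\widehat{\ms{A}}}\widehat{u}), \Omega}$, and the strict inequality in \eqref{eq: main thm 2} is precisely the pointwise condition $\lambda > 0$; the converse direction likewise reads $\lambda$ off pointwise and rescales, with no existence theory needed. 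Finally, your sketch works directly on the quotient $M$, which hides a term your Weitzenb\"ock rearrangement must handle: the paper lifts everything to the flat space $\hk$ via Propositions \ref{prop:1-1 correspondence} and \ref{prop: 1-1 correspondence of equations} (spinors $\widehat{u}$ with $\mu_{\mf{g}}\circ\widehat{u}=0$), precisely so that the flat Weitzenb\"ock formula \eqref{eq: weitzenbock for modified dirac} applies, and then needs a separate lemma (using $G$-invariance of the ${\rm U}(1)$-moment map) to show the curvature term $B(F^{+}_{\widehat{\ms{A}}_{\mf{g}}}\bullet\widehat{u}, \widehat{u})$ of the induced connection on $\mu_{\mf{g}}^{-1}(0)\rightarrow M$ pairs to zero. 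Without that step your identification of the trace and trace-free parts with the quantities in \eqref{eq: main thm 2} does not follow.
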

Theorem \ref{thm: main thm 2} was proved by Donaldson \cite{donaldson} for the usual Seiberg-Witten equations.
\smallskip

Notice that first equation in the second bullet of Theorem \ref{thm: main thm 2} is nothing but a perturbation of Euler-Lagrange equation of the energy functional
\eq{
\label{eq: energy functional twistor section}
\int_{X} \abs{\nabla\Omega}^2.
}
The functional was studied by Wood \cite{wood}. Critical points of the functional correspond to a choice of ``optimal" almost-complex structures, amongst all possible almost-complex structures on $X$. 

\section{Acknowledgements}
The author wishes to thank Prof. Clifford Taubes for pointing out a crucial error in the earlier version of the article. 
A major part of this article is based on the author's doctoral dissertation, during which he was financially supported by the DFG. The author thanks his supervisor Prof. V. Pidstrygach for his unwavering support and constant encouragement.
The author would also like to thank the anonymous referee for many helpful remarks and suggestions, especially on the second half of the article.

\section{Preliminaries and definitions}
\label{preliminaries}

\subsection{HyperK\"ahler manifolds}

A $4n$-dimensional Riemannian manifold $(M, g_{\sst M})$ is \emph{hyperK\"ahler} if it admits a triple of almost-complex structures $I_i \in \End(TM) ~ i=1,2,3$ , which are covariantly constant with respect to the Levi-Civita connection and satisfy quaternionic relations $I_i I_j = \delta_{ijk} I_k$. 

Let ${\rm Sp}(1)$ denote the group of unit quaternions and $\mf{sp}(1)$ denote its Lie algebra. The quaternionic structure on $M$ induces a covariantly constant endomorphism of $TM$ with values in $\mf{sp}(1)^{\ast} = \left(\mf{Im}(\HH)\right)^{\ast}$.
 \eq{
 \label{eq:algebra homomorphism}
 I \in \Gamma(M, \End(TM) \otimes \mf{sp}(1)^{\ast}), ~~~ I_\xi := \xi_1 I_1 + \xi_2 I_2 + \xi_3 I_3, ~~ \xi \in \mf{sp}(1).
 }
Observe that for every $\xi \in S^2 \subset \mf{Im}(\HH)$, the endomorphism $I_{\xi}$ is a complex structure. In other words, $M$ has an entire family of K\"ahler structures parametrized by $S^2$. Define the 2-form
\eqst{
\omega \in \Lambda^2 M \otimes \mf{sp}(1)^{\ast}, ~~ \omega_{\xi}(\cdot, \cdot) = g_{\sst M}(I_{\xi}(\cdot), \cdot).
}
If $\xi \in S^2$, then $\omega_{\xi}$ is just the K\"ahler 2-form associated to $I_{\xi}$.
\begin{defn}
\label{def: permuting action}
An isometric action of ${\rm Sp}(1)$ on $M$ is said to be \emph{permuting} if the induced action on the 2-sphere of complex structures is the standard action of ${\rm SO}(3) = {\rm Sp}(1)/\pm 1$ on $S^2$:
\eqst{ 
dq ~ I_{\xi} ~ dq^{-1} = I_{ q\xi \bar{q}}, ~~ \text{for} ~~ q \in {\rm Sp}(1),~~ \xi \in \mf{sp}(1), ~~ \norm{\xi}^2 = 1 
 }
\end{defn} 

\begin{defn}
\label{def: tri-holomorphic action}
An isometric action of a Lie group $G$ on $M$ is \emph{tri-holomorphic} or \emph{hyperK\"ahler}, if it preserves the hyperK\"ahler structure
\eqst{
\eta_{\ast} I_{i} = I_{i} \eta_{\ast} ~~ i=1,2,3, ~~ \eta \in G.
}
In particular, $G$ fixes the 2-sphere of complex structures on $M$. The action is \emph{tri-Hamiltonian} (or hyperHamiltonian) if it is Hamiltonian with respect to each $\omega_i$. The three moment maps can be combined together to define a single, $G$-equivariant map \emph{hyperK\"ahler moment map}  $\mu: M \longrightarrow \mf{sp}(1)^{\ast}\otimes \mf{g}^{\ast}$, which satisfies 
\eqst{
d(\langle \mu, \xi_{i}\otimes\eta) = \iota_{\sst K^M_{\eta}}\,\omega_i, ~~~ \eta \in \mf{g}, ~~ \xi_i \in \mf{sp}(1) ~~\text{is the basis}
}
and $K^M_{\eta}$ denotes the fundamental vector-field due to the infinitesimal action of $\eta$.
\end{defn}

\begin{defn}
\label{def: hyperkahler potential}
A \emph{hyperK\"ahler potential} is a smooth function $f: M \longrightarrow \R^+$ which is simultaneously a K\"ahler potential for all the three complex structures $I_1, I_2, I_3$.
\end{defn}

\subsection{Target hyperK\"ahler manifold}
Suppose that $M$ is a hyperK\"ahler manifold with a permuting action of ${\rm Sp}(1)$ and a tri-Hamiltonian action of a compact Lie group $G$ which commutes with the ${\rm Sp}(1)$-action. Let $\varepsilon \in G$ be a central element of order two. Let $\Z/2\Z \subset {\rm Sp}(1) \times G$ denote the normal subgroup of order two, generated by the element $(-1, \varepsilon)$. Assume that $\Z/2\Z$ acts trivially on $M$ so that the action of ${\rm Sp}(1) \times G$ descends to an action of ${\rm Spin}^G(3) \, := \, {\rm Sp}(1)\times_{\Z/2\Z} G$. We will refer to this action as a permuting action of ${\rm Spin}^G(3)$. An action of ${\rm Spin}^G(4):= ({\rm Sp}(1)_+ \times {\rm Sp}(1)_-) \times_{\Z/2\Z} G$ is said to be permuting if the action is induced by a permuting action of ${\rm Sp}(1) \cong {\rm Spin}(3)$ via the homomorphism 
\eqst{
\rho: {\rm Spin}^G(4) \longrightarrow {\rm Spin}^G(4)/{\rm Sp}(1)_- \cong {\rm Spin}^G(3).
}
Note that  ${\rm Sp}(1)_-$ acts trivially on $M$.

\subsection{\texorpdfstring{${\rm Spin}^G(4)-$}~ structure}

From the definition of the group ${\rm Spin}^G(4)$, we have the following exact sequence
\eq{
\label{eq: short exact seq sping str}
0 \longrightarrow \Z/2\Z \longrightarrow {\rm Spin}^G(4)\overset{\gamma}{\longrightarrow} {\rm SO}(4) \times \left(G/\{1, \epsilon \} \right) \longrightarrow 0.
}
For simplicity, put $\overline{G} = G/\{1, \epsilon \}$. Let $P_{{\rm SO}(4)}$ denote the frame-bundle of $X$ and $P \rightarrow X$ be a principal $\overline{G}$-bundle over $X$. A ${\rm Spin}^G(4)$-structure over $X$ is a principal ${\rm Spin}^G(4)$-bundle $\pi: Q \rightarrow X$, which is an equivariant double cover of the bundle $P_{{\rm SO}(4)} \times_X P$, with respect to the map $\gamma$ as defined in \eqref{eq: short exact seq sping str}. We refer to \cite{teleman00} for details.

\subsection{Generalised Dirac operator}
\label{subsec:generalised dirac operator}

We define the space of \emph{generalised spinors} to be the space of smooth, equivariant maps 
\eqst{
\Ss \, \, := \, \, C^{\infty}(Q, M)^{{\rm Spin}^G(4)} \, \, \cong \, \, \Gamma(X, Q \times_{{\rm Spin}^G(4)} M).
}
The Levi-Civita connection $\varphi$ on $P_{{\rm SO}(4)}$ and a connection $a$ on the principal $P$ together determine a unique connection on $Q$. Let $\scr{A}$ denote the space of all connections on $Q$, which are the lifts of the Levi-Civita connection. We define the covariant derivative of a spinor $u \in \Ss$, with respect to a connection $\ms{A} \in \scr{A}$ by\footnote{The subscript \emph{hor} implies that $D_{\ms{A}}u$ vanishes on vertical vector fields.}
 \eq{
 \label{covariant derivative}
  D_{\ms{A}} : C^{\infty}(Q,M)^{{\rm Spin}^G(4)} \lra \Hom(TQ, TM)^{{\rm Spin}^G(4)}_{hor} , ~~  D_{\ms{A}} u = du + K^M_{\ms{A}}|_{u}  
  }
where $K^M_{\ms{A}}|_{u}: TQ \rightarrow u^{\ast}TM$ is an equivariant bundle homomorphism defined by $K^M_{\ms{A}}|_{u} (v) = K^M_{\ms{A}(v)}|_{u(p)}$ for $v\in T_p Q$. Denote by $\pi_{\sst {\rm SO}}: Q \longrightarrow P_{{\rm SO}(4)}$ the projection to the frame bundle. Then, alternatively, one can view the covariant derivative as
\al{
\label{eq:covariant derivative alt. def.}
D_{\ms{A}}:~ C^{\infty}(Q,M)^{{\rm Spin}^G(4)} \lra C^{\infty}(Q, (\R^{4})^{*} \otimes TM)^{{\rm Spin}^G(4)}, ~~~ \langle D_{\ms{A}}u(q), w \rangle = du(q)(\widetilde{w})
}
where, $w \in \R^{4}$, $\widetilde{w}$ denotes the horizontal lift of $\pi_{\sst {\rm SO}}(q)(w) \in T_{\pi(q)}X$.

\subsection*{Clifford multiplication}
The second ingredient we need to define the Dirac operator is Clifford multiplication. From \eqref{eq:algebra homomorphism}, we an construct an action of $\mc{C}l^0_4 \cong \mc{C}l_3$ on $TM$ as
\eqst{
\R^3\cong \mf{Im}(\HH) \longrightarrow \End(TM), ~~ h \mapsto I_{h}.
}
The map extends to a ${\rm Spin}^G(4)$-equivariant map $\mc{C}l_3 \longrightarrow \End(TM)$. Thus $TM$ is naturally a $\mc{C}l^0_4 $ module. Now consider $W:= \mc{C}l_4 \otimes_{\mc{C}l^0_4} E$, where $E = (TM, I_1)$. Since $W$ is a $\mc{C}l^0_4$-module, we get a $\Z_2$-graded $\mc{C}l_4$-module
\eqst{
W = W^+ \oplus W^-, ~~ W^+ = \mc{C}l^0_4 \otimes_{\mc{C}l^0_4} E, ~~  W^- = \mc{C}l^1_4 \otimes_{\mc{C}l^0_4} E.
}
 More precisely, $W^+$ is the ${\rm Spin}^G(4)$-equivariant bundle $TM$ with an action induced by $\rho$, whereas $W^-$ is the ${\rm Spin}^G(4)$-equivariant vector bundle $TM$ equipped the left-action: \[ [q_{+}, q_{-}, g] \cdot w_{-} = I_{q_{-}} I_{\bar{q}_{+}} dq_{+} dg ~ w_{-}.\]
 
Identify $\R^{4}$ with $\HH$ by mapping the standard, oriented basis  $(e_{1},e_{2},e_{3},e_{4})$ of $\R^{4}$, to $(1, \bar{i}, \bar{j}, \bar{k})$. The ${\rm Spin}^G(4)$-action on $\HH$ is given by $[q_{+}, q_{-}, g] \cdot h = q_{-}h \bar{q}_{+}$. Clifford multiplication is the ${\rm Spin}^G(4)$-equivariant map
 \eq{
 \label{eq:clifford multiplication}
  \bullet: (\R^{4})^{\ast} \cong \HH \lra \End(W^+ \oplus W^-), ~~
  g_{\R^{4}}(h, \cdot) \longmapsto \begin{bmatrix} 0 & -I_{\bar{h}} \\ I_{h} & 0 
 \end{bmatrix}.
  }
Since $ h \bullet h = -g_{\R^{4}}(h,h)\cdot \id_{W^+ \oplus W^-}$, by universality property, the map $\bullet$ extends to a map of algebras $\bullet: \mc{C}l_{4} \lra \End(W^+ \oplus W^-)$. Composing $\bullet$ with the covariant derivative, we get the \emph{generalised Dirac operator}:
\eq{
\label{eq:gen. dirac operator explicit}
\Dd_{\ms{A}} u \in C^{\infty}(Q, u^{*}W^-)^{{\rm Spin}^G(4)}, ~~~~ \Dd_{\ms{A}}u = \sum_{i=0}^{3} e_{i} \bullet D_{\ms{A}}u(\tilde{e_{i}})
}
where the latter expression follows from equation \eqref{eq:covariant derivative alt. def.}. 

\subsection*{Generalised Seiberg-Witten equations}

Let $\mu$ be a hyperK\"ahler moment map for the $G$-action on $M$ and $a$ be a connection on $P$. Then \emph{generalised Seiberg-Witten equations} for a pair $(u, \ms{A}) \in \Ss \times \scr{A}$, in dimension four, are
\begin{equation}
\label{eq:gen. seiberg-witten}
\left\{\begin{array}{l}
\Dd_{\ms{A}} u = 0 \\
F^{+}_{a} - \Phi(\mu \circ u) = 0
\end{array}\right.
\end{equation}
where $F^+_{a} \in \Map(Q, \Lambda^2_+(\R^4)^{\ast})^{{\rm Spin}^G(4)}$ is the self-dual part of the curvature of $a$ and $\Phi: \mf{sp}(1)^{\ast} \longrightarrow \Lambda^2_+ (\R^4)^{\ast}$ is the isomorphism mapping the basis elements $\xi_l \mapsto \beta_l, \, l = 1,2,3$, where
\eq{
\label{eq: basis of self-dual 2-forms}
\beta_{0} = dx_0 \wedge dx_1 + dx_2 \wedge dx_3, ~ \beta_1 = dx_0 \wedge dx_2 + dx_3 \wedge dx_1, ~ \beta_3 = dx_0 \wedge dx_3 + dx_1 \wedge dx_2.
}
We will supress the isomorphism henceforth.


\section{Conformal transformation of generalised Dirac operator} \label{conformalproperty}

This section is divided into three parts. In the first part, subsection \ref{subsec: metric connections}, we study metric connections for metrics in the conformal class of $g_{\sst X}$. Namely, given the Levi-Civita connection of $g_{\sst X}$ and a metric $g'_{\sst X} \in [g_{\sst X}]$, we explicitly construct the Levi-Civita connection for $g'_{\sst X}$. In the second part, subsection \ref{subsec: example 2 Swann bundles}, we give a quick review of Swann's construction. In the third part, subsection \ref{subsec: gen dirac and conf. metric}, we use the results from subsection \ref{subsec: metric connections} to obtain a formula for conformal transformation of the generalised Dirac operator when the target hyperK\"ahler manifold obtained via Swann's construction. For details on ideas used in this section, we refer the interested reader to \cite{salamon}.

\subsection{Metric connections on conformal bundle}
\label{subsec: metric connections}

\noindent Fix a metric $g_{\sst X}$ on $X$ and let $[g_{\sst X}]$ denote its conformal class. Let $\pi_{1}: P_{{\rm CO}(4)} \longrightarrow X$ denote the bundle of all conformal frames on $(X,[g_{\sst X}])$. A point $p\in P_{{\rm CO }(4)}$ is a ${\rm CO}(4)$-equivariant, linear isomorphism $p: \R^4 \longrightarrow T_{\pi_1(p)}X$. Consider the canonical one-form $\theta: P_{{\rm CO}(4)} \longrightarrow \R^4$ defined as
\eqst{
\theta_{p}(v) = p^{-1}\left((\pi_{1})_{\ast}(v)\right), ~~ p \in P_{{\rm CO}(4)}, ~~v \in T_{p}P_{{\rm CO}(4)}.
}
A metric on $X$ is a section $g_{\sst X} \in \Gamma(X, S^{2}(T^{*}X))$, which can viewed as an equivariant map in $C^{\infty}(P_{{\rm CO}(4)}, S^2(\R^4)^{\ast})^{{\rm CO}(4)}$
\eqst{
\pi_{1}^{\ast}g_{\sst X}\left(\cdot, \cdot \right) = g_{\R^{4}}\left(\theta_{p}(\cdot), \theta_{p}(\cdot)\right) .
}
For a smooth, real-valued function $f$ on $X$, consider the metric $g'_{\sst X}=e^{2(\pi_{1}^{*}f)}g_{\sst X}$ in the conformal class of $g_{\sst X}$. The metrics $g_{\sst X}$ and $g'_{\sst X}$ determine two isomorphic ${\rm SO}(4)$ bundles:
\begin{center}
$P_{{\rm SO}(4)}=\{p\in P_{{\rm CO}(4)}~|~ g_{\R^{4}}(\theta_{p}, \theta_{p}) = \pi_{1}^{\ast}g_{\sst X}(\cdot, \cdot)\}$ \\[0.2cm]  
$P'_{{\rm SO}(4)}=\{p\in P_{{\rm CO}(4)}~|~ g_{\R^{4}}(\theta_{p}, \theta_{p}) = e^{2(\pi_{1}^{\ast}f)}\pi_{1}^{\ast}g_{\sst X}(\cdot, \cdot)\}$
\end{center}
where, $ g_{\R^{4}}(\cdot, \cdot)$ is the standard metric on $\R^{4}$. Let $\varphi$ be a connection on $P_{{\rm CO}(4)}$. Then $\varphi + \theta$ define a 1-form with values in $\mf{co}(4) \oplus \R^4$. We can extend the bracket on the Lie algebra $\mf{co}(4)$ to $\mf{co}(4) \oplus \R^4$ as
\eqst{
[A, x] = - [x, A] = Ax, ~ [x,y] = 0, ~ \text{for} ~ x,y \in \R^4 ~\text{and}~ A \in \mf{co}(4).
}
This defines an affine Lie algebra which is best identified with the frame bundle of $\R^4$.
The failure of the 1-form $\varphi + \theta$ to conform with the associated Maurer-Cartan form is measured by
\eqst{
d(\varphi + \theta) + [\varphi + \theta, \varphi + \theta] = \mc{R}(\varphi) + T(\varphi)
}
where
\eqst{
\mc{R}(\varphi) = d\varphi + \frac{1}{2}[\varphi, \varphi], ~~~ T(\varphi) = d\theta + [\varphi, \theta].
}
Here the entities $\mc{R}$ and $T$ are horizontal 2-forms on the conformal frame bundle, which are nothing but the curvature and the torsion tensors, respectively and the Lie bracket operations are carried out simultaneously with wedging of 1-forms. 

Suppose that $\varphi$ is a connection on $P_{{\rm CO}(4)}$ satisfying
\eq{
(d+\varphi)\, g_{\sst X} = 0 ~~~ \text{and} ~~~ (d+\varphi)\, \theta = 0.
}
Then $ \varphi$ is just the Levi-Civita connection for the metric $ g_{\sst X}$. Let $\varphi'$ denote the Levi-Civita connection for the metric $g'_{\sst X}$. The difference of the 2-connections is a horizontal 1-form on $P_{{\rm CO}(4)}$ and therefore can be written as contraction of $\theta$ with an equivariant function $\xi \in \Hom(\R^{4}, \mf{co}(4)) \cong (\R^{4})^{*} \otimes \mf{co}(4)$. More precisely,
\[ \pair{\theta_p, \,  \xi }(Y) = \pair{\theta_p(Y), \, \xi}, ~~~ Y\in T_{p}P_{{\rm CO}(4)}.\]
Therefore we may write 
\eq{
\label{difference of LC connections}
\varphi'-\varphi \, = \, \pair{\theta, \, \xi} ~~\text{for some}~~ \xi \in (\R^{4})^{*} \otimes \mf{co}(4).
}
Throughout, we will supress the pairing with $\theta$ and simply write $\varphi'-\varphi=\xi$. Consider the covariant derivative of $g'_{\sst X}$ with respect to $\varphi$
\eq{
\label{eq: levi civita phi'}
(d+\varphi)\,(g'_{\sst X}) \, = \, -e^{2(\pi_{1}^{*}f)}\,\, 2\,(\pi_{1}^{*}df)\,g_{\sst X}.
}
The right hand side of the equation can be understood as follows. Define 
\eqst{
f_{i}(p) = \pi_{1}^{*}df \, (\widetilde{p(e_{i})}),
}
where, $e_{i}\in \R^{4}$ is the standard basis element of $\R^4$ and $\widetilde{p(e_{i})}$ is the horizontal lift of $p(e_{i})$ to $P_{{\rm CO }(4)}$ with respect to $\varphi$. We can write 
\eqst{
\pi_{1}^{*}df(p) \, = \, \pair{\sum_{i=1}^4 f_i(p)\, e^i, \, \theta_p}, \,\,\,\,\,\, \sum_{i=1}^4 f_i(p)\, e^i \in (\R^4)^{\ast} \hookrightarrow (\R^4)^{\ast}\otimes \mf{co}(4)
} 
where $e^i$ are the basis for $(\R^4)^{\ast}$. So the action of $\pi_{1}^{*}df$ is just the (left) action of $\sum_{i=1}^4 f_i\, e^i \in \End(\R^4)$.
\begin{rmk}
The negative sign in the equation \eqref{eq: levi civita phi'} is due to the left action of $\aut(\R^{4}) \curvearrowright S^{2}(\R^{4})^{\ast}$, which is given by \[S^{2}(\R^{4})^{\ast} \ni g_{\sst X} \longmapsto b \cdot g_{\sst X}(\cdot, \cdot) := g_{\sst X}(b^{-1}, b^{-1}),\] where $b \in \aut(\R^{4})$.
\end{rmk}

It follows that $\varphi \, + \, \pi_1^{\ast}df$ is a metric connection for $g'_{\sst X}$. But it has a non-zero torsion. Indeed
\eq{
\label{eq: torsion of phi'}
\left(d \, + \, \varphi \,+ \, \pi_1^{\ast}df \right)\,\theta = \pair{\sum_{i=1}^4 f_i\, e^i, \, \theta} \wedge \theta.
}
Point-wise, the torsion tensor is a map 
\eqst{
T(\varphi)(p):\Lambda^{2}D_{p} \cong \Lambda^{2}\R^{4}\xrightarrow{d\theta} \R^{4}.
} 
For the connections $\varphi$ and $\varphi'$ on $P_{{\rm CO}(4)}$, the difference between their torsion tensors is
\eqst{
T(\varphi')_p \, (x\wedge y) - T(\varphi)_p \, (x\wedge y) = \frac{1}{2}(\xi_p(x) \, y -\xi_p(y) \, x), ~~~~ x,y\in\R^{4},
}
In terms of the ${\rm CO}(4)$-equivariant homomorphism:
\eqst{
\delta:(\R^{4})^{*}\otimes \mf{co}(4)\hookrightarrow (\R^{4})^{\ast}\otimes(\R^{4})^{\ast}
\otimes\R^{4}\mapsto\Lambda^{2}(\R^{4})^{*}\otimes\R^{4}\cong \Lambda^{2}(\R^{4})^{*}\otimes(\R^{4})^{\ast}
}
where, the first map is the inclusion and the second one is the anti-symmeterization, we can write $T(\varphi')_p - T(\varphi)_p = -\delta\xi$. Therefore, it follows from \eqref{eq: torsion of phi'} that
\eqst{
\pair{\sum_{i=1}^4 f_i(p)\, e^i, \, \theta} \wedge \theta \, = \, -\delta\left(\sum_{i=1}^4 f_i(p)\, e^i \right).
}
Identify $\mf{so}(4)\cong \Lambda^{2}$ by associating the skew-symmetric endomorphism, to a pair of vectors $v,w\in \R^{n}$, 
\eq{ \label{so4isomorphism}
v\wedge w = \langle v,\cdot \rangle w - \langle w,\cdot \rangle v. 
}
\begin{lem}[\cite{salamon}, Prop. 2.1]
The restriction \[\delta|_{\mf{so}(4)}:(\R^{4})^{\ast}\otimes\Lambda^{2}(\R^{4})^{\ast}\mapsto\Lambda^{2}(\R^{4})^{\ast}\otimes(\R^{4})^{\ast}\]
that maps the difference of two connections to the difference of their torsions is an isomorphism.
\end{lem}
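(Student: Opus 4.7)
The plan is to prove the statement by a dimension count combined with an injectivity argument of Christoffel type. First, I would observe that both spaces have the same dimension: $\dim\bigl((\R^4)^{\ast}\otimes\mathfrak{so}(4)\bigr)=4\cdot 6=24$ and $\dim\bigl(\Lambda^2(\R^4)^{\ast}\otimes(\R^4)^{\ast}\bigr)=6\cdot 4=24$. Therefore it suffices to establish that $\delta|_{\mathfrak{so}(4)}$ is injective.

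Next, I would unwind the definition of $\delta$ in coordinates. Fix the standard basis $(e_1,\dots,e_4)$ of $\R^4$ and write an element $\xi\in(\R^4)^{\ast}\otimes\mathfrak{so}(4)$ as $\xi_{ijk}:=\pair{\xi(e_i)(e_j),\, e_k}$. Membership in $\mathfrak{so}(4)$ via the identification \eqref{so4isomorphism} is exactly the skew-symmetry $\xi_{ijk}=-\xi_{ikj}$ in the last two indices. From the very construction of $\delta$ as the composition of inclusion with anti-symmetrization in the first two indices, one finds
\eqst{
\delta(\xi)_{ijk}\,=\,\xi_{ijk}-\xi_{jik}.
}

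Suppose now $\delta(\xi)=0$, so $\xi_{ijk}=\xi_{jik}$, i.e.\ $\xi$ is symmetric in its first two indices and skew in its last two. The classical Christoffel manipulation then yields
\eqst{
\xi_{ijk}\,=\,\xi_{jik}\,=\,-\xi_{jki}\,=\,-\xi_{kji}\,=\,\xi_{kij}\,=\,\xi_{ikj}\,=\,-\xi_{ijk},
}
hence $\xi_{ijk}=0$ for all $i,j,k$. This proves injectivity, and the dimension count above promotes this to the desired isomorphism.

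There is no real obstacle here beyond bookkeeping of indices; the content is the standard observation underlying the existence and uniqueness of the Levi-Civita connection, which is precisely what is needed in Subsection~\ref{subsec: metric connections} to solve for the correction term $\xi$ of \eqref{difference of LC connections} in terms of the prescribed torsion \eqref{eq: torsion of phi'}. If desired, one can also exhibit the explicit inverse $\delta^{-1}$ by symmetrizing the torsion difference in a manner analogous to the Koszul formula, but this is not required for the statement as written.
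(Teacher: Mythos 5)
Your proposal is correct and follows essentially the same route as the paper: both express $\delta$ in Christoffel-type indices as the anti-symmetrization in the first two slots (the paper's normalization carries a factor $\tfrac{1}{2}$, which is immaterial) and conclude injectivity from the standard index-cycling argument, with the equal-dimension count promoting this to an isomorphism. You merely spell out the index chase and the dimension count that the paper's terse ``it is easily seen'' leaves implicit.
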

\begin{proof}
Let $a_{ijk} \in (\R^{4})^{\ast}\otimes\Lambda^{2}(\R^{4})^{\ast}$ denote the difference of Christoffel symbols of the two connections. Then, $\delta(a_{ijk}) = \frac{1}{2}(a_{ijk}-a_{jik})$. It is easily seen that if $a_{ijk}\in \text{ker}(\delta)$, then $a_{ijk} = 0$ and hence $\delta|_{\mf{so}(4)}$ is an isomorphism. 
\end{proof}
Suppose that $A$ is the Levi-Civita connection and $B$ is a metric connection on $P_{{\rm CO}(4)}$. Then using the isomorphism $\delta|_{\mf{so}(4)}$, we obtain the expression for $A$ in terms of $B$. Let $B' = B - \alpha$ where $\alpha = \delta|_{\mf{so}(4)}^{-1} (\delta(\xi))$. Then a straightforward computation shows that $T(B') = 0$. This is the strategy we are going to employ to express $\varphi'$ in terms of $\varphi$ and correction terms.

\noindent Pointwise, we can view $\sum_{i=1}^4 f_i\, e^i$ as a 1-form with values in $(\R^4)^{\ast}\otimes\mf{co}(4)$, by writing
\eqst{
\sum_{i=1}^4 f_i\, e^i =  \sum_{i,j} f_{i}\, e^{i} \otimes e^j \otimes e_j \in (\R^4)^{\ast}\otimes (\R^4)^{\ast} \otimes \R^4.
} Using the isomorphism $\R^4 \cong (\R^4)^{\ast}$, we can write the right hand side as
$\sum_{i,j} f_{i}\, e^{i} \otimes e^j \otimes e^j.$ So,
\eqst{
\delta \left(\sum_{i, j} f_i \, e^i \otimes e^j \otimes e^j \right) \, 
 = \, \frac{1}{2} \sum_{i, j} f_i \left( e^i \otimes e^j \otimes e^j -  e^j \otimes e^i \otimes e^j \right)
}
and therefore 
\alst{
 \delta|_{\mf{so}(4)}^{-1} \left[\delta \left(\sum_{i, j} f_i \, e^i \otimes e^j \otimes e^j \right) \right] = \sum_{i, j} f_{i} \, (e^j \otimes e^j \otimes e^i - e^j \otimes e^i \otimes e^j) = -\sum_{i, j} f_{i} \, e^{j}\otimes (e^{i}\wedge e^{j}).
}
It is now easily verified that the torsion
\eqst{
T \left(\varphi + \pi_1^{\ast}df - \delta|_{\mf{so}(4)}^{-1} \left(\delta \left(-\sum_{i=1}^4 f_i\, e^i \right) \right)\right) = -\delta \left(\sum_{i=1}^4 f_i\, e^i \right) - \delta \left(-\sum_{i=1}^4 f_i\, e^i \right) = 0.
}
In conclusion, this is nothing but the Levi-Civita connection for the metric $g'_{\sst X}$ and therefore
\eqst{
\label{diffconnso}
\varphi' = \varphi + \pi_1^{\ast}df + \pair{\sum_{i, j} f_{i} e^{j}\otimes (e^{i}\wedge e^{j}), \theta}.
}
For simplicity, put $\alpha = \pi_1^{\ast}df + \pair{\sum_{i, j} f_{i} e^{j}\otimes (e^{i}\wedge e^{j}), \theta}$.

\begin{prop}[\cite{lawsonmichelsohn} Prop. 6.2, Chap. I]
The adjoint representation induces the Lie algebra isomorphism $\zeta: \mf{spin}(n)\longrightarrow \mf{so}(n)$ is given by:
\[ \zeta(e_{i}e_{j}) = 2e_{i}\wedge e_{j},\] where, $\{e_{i}e_{j}\}_{i<j}$ are the basis elements of $\mf{spin}(n)$. Consequently for $v,w \in \R^{n}$, \[ \zeta^{-1}(v\wedge w) = \frac{1}{4} [v, w] .\]
\end{prop}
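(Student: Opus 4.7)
The plan is to identify $\mf{spin}(n)$ as a concrete subspace of the Clifford algebra $\mc{C}l_n$, namely the linear span of the products $e_i e_j$ with $i<j$, on which the Lie bracket is the Clifford commutator $[x,y]=xy-yx$. Under this identification, $\text{Spin}(n)$ acts on $\R^n\subset \mc{C}l_n$ by conjugation, and differentiating this action at the identity gives the adjoint representation $\zeta(x)(v)=[x,v]$, computed inside $\mc{C}l_n$. Since conjugation by $\text{Spin}(n)$ preserves the metric on $\R^n$ and covers $\text{SO}(n)$ with kernel $\pm 1$, the induced map $\zeta$ lands in $\mf{so}(n)$ and is a Lie algebra isomorphism by dimension count ($\binom{n}{2}$ on both sides). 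All that remains is to compute $\zeta$ on the basis $\{e_ie_j\}_{i<j}$ and compare it with the identification $\mf{so}(n)\cong \Lambda^2\R^n$ given in \eqref{so4isomorphism}.

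The core computation is a direct application of the Clifford relations $e_ae_b+e_be_a=-2\delta_{ab}$. For $i\neq j$ fixed, I would evaluate $[e_ie_j,e_k]$ case by case: when $k\notin\{i,j\}$, two anticommutations show $e_ie_je_k=e_ke_ie_j$, so the bracket vanishes; when $k=i$, one gets $e_ie_je_i=-e_i^2e_j=e_j$ while $e_ie_ie_j=-e_j$, yielding $[e_ie_j,e_i]=2e_j$; symmetrically $[e_ie_j,e_j]=-2e_i$. On the other hand, plugging into \eqref{so4isomorphism} gives $(e_i\wedge e_j)(e_i)=e_j$, $(e_i\wedge e_j)(e_j)=-e_i$, and $(e_i\wedge e_j)(e_k)=0$ for $k\notin\{i,j\}$. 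Matching the two answers term by term establishes
\eqst{
\zeta(e_ie_j)=2\,e_i\wedge e_j.
}

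For the inverse formula, linearity reduces the claim to the basis. Since $e_i$ and $e_j$ anticommute in $\mc{C}l_n$ for $i\neq j$, we have $[e_i,e_j]=2e_ie_j$, so $\tfrac14[e_i,e_j]=\tfrac12 e_ie_j$, which matches $\zeta^{-1}(e_i\wedge e_j)=\tfrac12 e_ie_j$ obtained from the previous step. The formula $\zeta^{-1}(v\wedge w)=\tfrac14[v,w]$ for general $v,w\in\R^n$ follows by bilinearity, because both sides are bilinear and alternating in $(v,w)$ (the Clifford commutator of two vectors is automatically alternating, and $\tfrac14[v,v]=0$ matches $v\wedge v=0$).

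There is no real obstacle here: the whole proposition is a bookkeeping exercise once the model $\mf{spin}(n)=\operatorname{span}\{e_ie_j:i<j\}\subset \mc{C}l_n$ is in hand. The only things to watch are the factor of $2$ (which is exactly what distinguishes the Spin cover from $\text{SO}$ at the Lie algebra level and is ultimately responsible for the $\tfrac14$ in the inverse formula) and the sign convention $e_i^2=-1$ used in the Clifford algebra above, which is consistent with the ambient conventions of the paper. The identification of the Lie algebras as Clifford subspaces is standard and can be cited from \cite{lawsonmichelsohn} without further elaboration.
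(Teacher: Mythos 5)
Your proof is correct and follows essentially the same route as the paper's: the paper in fact gives no proof of this proposition, deferring entirely to \cite{lawsonmichelsohn} (Prop.~6.2, Chap.~I), and the argument there is precisely your computation — realise $\mf{spin}(n)$ as $\operatorname{span}\{e_ie_j\}_{i<j}\subset \mc{C}l_n$, differentiate the conjugation action to get $\zeta(x)v=[x,v]$, and evaluate $[e_ie_j,e_k]$ case by case via the relations $e_ae_b+e_be_a=-2\delta_{ab}$. Your sign convention $e_i^2=-1$, the matching against the identification \eqref{so4isomorphism}, and the bilinear-alternating extension giving $\zeta^{-1}(v\wedge w)=\tfrac14[v,w]$ are all consistent with the paper's conventions, so nothing is missing.
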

Under this isomorphism , $\alpha$ gets mapped to $\sum_{i=1}^4 f_i\, e^i  + \frac{1}{4}\sum_{i, j} f_{i} e^{j}\otimes (e^{i}e^{j} - e^{j}e^{i})$. We denote this again by $\alpha$. 

\subsection{A review of Swann's construction}
\label{subsec: example 2 Swann bundles}

A quaternionic K\"ahler manifold is a $4n$ dimensional manifold whose holonomy is contained in ${\rm {Sp}}(n){\rm {Sp}}(1):= ({\rm {Sp}}(n)\times {\rm {Sp}}(1))/\pm 1$. Let $N$ be a quaternionic K\"ahler manifold of positive scalar curvature and $F$ be the ${\rm {Sp}}(n){\rm {Sp}}(1)$ reduction of the frame bundle $P_{{\rm SO}(4n)}$ of $N$. Then $\mc{S}(N) := F/{\rm {Sp}}(n)$ is a principal ${\rm SO}(3)$-bundle, which is the frame bundle of the three- dimensional vector sub-bundle of skew symmetric endomorphisms of $TN$. The ${\rm {Sp}}(1)$-action, by left multiplication, descends to an isometric action of ${\rm SO}(3)$ on $\HH^{\ast}/\Z_2$. Swann bundle over $N$ is the principal $\HH^{\ast}/\Z_2$ 
\eqst{
\swann := \mc{S}(N)\times_{{\rm SO}(3)}(\HH^{\ast}/Z_2) \longrightarrow N
}

\begin{thm}\cite{swann}
\label{thm: swanns theorem}
The manifold $\swann$ is a hyperK\"ahler manifold with a free, permuting action of ${\rm SO}(3)$ and admits a hyperK\"ahler potential given by $\rho_0 = \frac{1}{2} r^2$. The vector field $\euler = -I_{\xi}K^M_{\xi}$ is independent of $\xi\in \mf{sp}(1)$ and $\grad \rho_0 = \euler$. Moreover, if a Lie group $G$ acts on $N$, preserving the quaternionic K\"ahler structure, then the action can be lifted to a tri-Hamiltonian action of $G$ on $\swann$.
\end{thm}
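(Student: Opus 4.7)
The plan is to build the hyperK\"ahler data on $\swann$ directly from the principal bundle presentation $\mc{S}(N) \times_{{\rm SO}(3)} (\HH^{\ast}/\Z_2)$, and then verify each claim in turn. Let $\pi: \swann \to N$ be the projection. The quaternionic K\"ahler connection on $N$ restricts to a principal ${\rm SO}(3)$-connection on $\mc{S}(N)$, giving a horizontal distribution on $\swann$. At a point $m = [s, h]$, identify the horizontal subspace with $T_{\pi(m)} N$ and the vertical subspace with $\HH$. Set $g_{\sst M} = |h|^{2}\, \pi^{\ast} g_{\sst N} \oplus g_{\HH}$, and define almost-complex structures $I_1, I_2, I_3$ on $T\swann$ by letting them act as left multiplication by $-\bar{\imath}, -\bar{\jmath}, -\bar{k}$ on the vertical summand, and as the triple of local almost-complex structures on $N$ that the frame $s$ represents (rotated by $h$) on the horizontal summand. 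The ${\rm SO}(3)$-equivariance of these assignments is precisely what makes them descend from $\mc{S}(N) \times (\HH^{\ast}/\Z_{2})$ to $\swann$.

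The main obstacle is showing that $(g_{\sst M}, I_1, I_2, I_3)$ is hyperK\"ahler, that is, that each K\"ahler 2-form $\omega_i(\cdot,\cdot) = g_{\sst M}(I_i \cdot, \cdot)$ is closed. The strategy is to exhibit $\omega_i$ in the form $\omega_i = -\tfrac{1}{2}\, d(I_i\, d\rho_0)$ with $\rho_0 = \tfrac{1}{2}|h|^{2}$; closedness is then immediate, and what remains is to match this exact expression with the direct-sum formula defined above. The calculation splits into a pure vertical piece recovering the flat hyperK\"ahler structure on $\HH$, a mixed horizontal-vertical piece consuming the connection 1-form of $\mc{S}(N)$, and a pure horizontal piece in which the curvature of the Levi-Civita connection on $N$ appears. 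The cancellation in the horizontal block is precisely where the hypothesis that $N$ is quaternionic K\"ahler of positive scalar curvature is used: the curvature identity characterising quaternionic K\"ahler manifolds, paired with the correct scalar factor coming from the scalar curvature, forces this block to vanish. Once $d\omega_i = 0$ is secured, the ${\rm SO}(3)$-action on the $\HH^{\ast}/\Z_{2}$-factor by left multiplication is manifestly free and, by construction, rotates the $I_i$ as required by the permuting condition.

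For the Euler-type vector field, differentiating the permuting relation $dq\, I_{\xi}\, dq^{-1} = I_{q \xi \bar{q}}$ at $q = 1$ gives $[K^{M}_{\eta}, I_{\xi}] = I_{[\eta, \xi]}$ for all $\eta \in \mf{sp}(1)$. Combining this with $[K^{M}_{\xi}, K^{M}_{\eta}] = K^{M}_{[\xi, \eta]}$ forces $I_{\xi} K^{M}_{\xi}$ to be invariant under rotations of $\xi$ in $S^{2}$, hence independent of $\xi$. The identity $\grad \rho_{0} = \euler$ then follows from the moment-map relation $d\rho_{0} = -\iota_{K^{M}_{\xi}} \omega_{\xi}$, valid because $\rho_{0}$ is simultaneously a K\"ahler potential for each $I_{\xi}$: using $\omega_{\xi} = g_{\sst M}(I_{\xi}\,\cdot, \cdot)$ converts this into $\grad \rho_{0} = -I_{\xi} K^{M}_{\xi} = \euler$. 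Finally, if $G$ acts on $N$ preserving the quaternionic K\"ahler structure, the action lifts canonically to $\mc{S}(N)$, commutes with the ${\rm SO}(3)$-action, and descends to $\swann$, where it preserves all the constructed data. Tri-Hamiltonicity of the lifted action is obtained by defining, for $\eta \in \mf{g}$, the $\mf{sp}(1)^{\ast}$-valued function $\mu_{\eta}(m) := g_{\sst M}(\euler_{m},\, I\, K^{M}_{\eta}|_{m})$; the required relation $d\mu_{\eta} = \iota_{K^{M}_{\eta}} \omega$ is then a direct consequence of $\grad \rho_{0} = \euler$, the parallelism of the $I_i$, and the commutativity of the $G$- and ${\rm SO}(3)$-actions.
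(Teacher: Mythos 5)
The paper gives no proof of this theorem: it is stated as a quotation of \cite{swann}, so the only meaningful benchmark is Swann's original argument, and in outline you follow it faithfully --- the cone metric $dr^{2} + r^{2}(\pi^{\ast}g_{N} + \cdot)$ on $\mc{S}(N)\times_{{\rm SO}(3)}(\HH^{\ast}/\Z_2)$, the strategy of exhibiting each K\"ahler form as the exact form $\omega_{\xi} = -\tfrac{1}{2}\,d(d\rho_{0}\circ I_{\xi})$, the quaternionic K\"ahler curvature identity in the horizontal block, and the canonical lift of $G$ through the frame bundle. Two calibrations: the positive scalar curvature hypothesis is used not so much to ``force the horizontal block to vanish'' as to make the resulting horizontal term of the metric positive definite (the curvature identity fixes a proportionality constant whose sign is that of the scalar curvature), and closedness of the three fundamental 2-forms yields the hyperK\"ahler property only via Hitchin's lemma that a compatible almost-hypercomplex triple with closed K\"ahler forms is parallel --- you nowhere address integrability, so this lemma must be invoked.

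The genuine gap is in your treatment of $\euler$. From the permuting relation and $dq_{\ast}K^{M}_{\xi} = K^{M}_{q\xi\bar{q}}$ one obtains only the equivariance $dq\,(I_{\xi}K^{M}_{\xi}) = I_{q\xi\bar{q}}K^{M}_{q\xi\bar{q}}$, which says the family $\xi \mapsto -I_{\xi}K^{M}_{\xi}$ transforms covariantly, not that it is constant in $\xi$ at a given point: polarising, independence on the unit sphere is equivalent to $I_{\xi}K^{M}_{\eta} + I_{\eta}K^{M}_{\xi} = 0$ for orthogonal $\xi, \eta \in \mf{sp}(1)$, i.e.\ to the vanishing of the traceless (spin-2) component of the quadratic form $\xi \mapsto I_{\xi}K^{M}_{\xi}$, and equivariance alone cannot kill that component. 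Likewise the relation $d\rho_{0} = -\iota_{K^{M}_{\xi}}\omega_{\xi}$ is not a formal consequence of $\rho_{0}$ being a simultaneous K\"ahler potential: Cartan's formula together with $\mathcal{L}_{K^{M}_{\xi}}(d\rho_{0}\circ I_{\xi}) = 0$ gives only $\iota_{K^{M}_{\xi}}\omega_{\xi} = \tfrac{1}{2}\,d\bigl(d\rho_{0}(I_{\xi}K^{M}_{\xi})\bigr)$, so your derivation presupposes essentially the identity it is meant to establish. Both facts should instead be verified in the model, where they are immediate: with the ${\rm SO}(3)$-action by left multiplication on the fibre coordinate $h \in \HH^{\ast}/\Z_2$, one has $K^{M}_{\xi}|_{[s,h]}$ represented by $\xi h$, the vertical part of $I_{\xi}$ given by left quaternionic multiplication by $\xi$, and $\euler$ the radial field $h$; hence $K^{M}_{\xi} = I_{\xi}\euler$ pointwise, from which independence of $\xi$ and $\grad\rho_{0} = \euler$ both drop out. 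The same homogeneity ($\mathcal{L}_{\euler}\,\omega_{\xi} = 2\omega_{\xi}$) shows that the Liouville forms $\lambda_{\xi} = \tfrac{1}{2}\iota_{\euler}\omega_{\xi}$ satisfy $d\lambda_{\xi} = \omega_{\xi}$ and are $G$-invariant, so $\mu_{\xi,\eta} = -\lambda_{\xi}(K^{M}_{\eta})$ is the moment map; your formula $\mu_{\eta} = g_{\sst M}(\euler, I\,K^{M}_{\eta})$ is this up to a factor of $\tfrac{1}{2}$, and the homogeneity argument supplies the closedness computation you only gesture at.
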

\noindent The Riemannian metric on the total space $\swann$ is given by $g_{\sst \swann} = g_{\HH^{*}/\Z_{2}} + r^{2}g_{N}$ where $r$ is the radial co-ordinate on $\HH^{*}/\Z_{2}$ and $g_{\HH^{*}/\Z_{2}}$ is the quotient metric obtained from $\HH$. Alternatively, one can write 
\eqst{
\swann = (0,\infty) \times \mc{S}(N)
}
with metric $g_{\mc{U}(N)} = dr^{2} + r^{2}(g_{N} + g_{\mbb{RP}^{3}})$, where $g_{\mbb{RP}^{3}}$ is the quotient metric on $\mbb{RP}^{3}$ derived from its double cover $S^{3}$. Thus, $\swann$ is a metric cone over $\mc{S}(N)$. The manifold $\swann$ is equipped with a natural left action of $\HH^{\ast} \cong \R^+ \times {\rm {Sp}}(1)$
\eq{
\big( (\lambda,q)\, (r,s) \big) \longmapsto (\lambda \cdot r, \, q \cdot s).
}

\subsection{Generalised Dirac operators for conformally related metrics}
\label{subsec: gen dirac and conf. metric}
Henceforth, fix an $M = \swann$, for some quaternionic K\"ahler manifold $N$ of positive scalar curvature and an action of $G$ that preserves the quaternionic K\"ahler structure on $N$. By Theorem \ref{thm: swanns theorem}, the action lifts to a tri-Hamiltonian action of $G$ on $\swann$. Therefore $M$ carries a permuting action of ${\rm Spin}^G(4)$.

Define the conformal ${\rm Spin}^G(4)$ group ${\rm CSpin}^G(4) := \R^{+} \times {\rm Spin}^G(4)$, which is a double cover of ${\rm CO}(4) \times G$ 
\eq{
\label{csping}
0 \longrightarrow \Z/2\Z \longrightarrow {\rm CSpin}^G(4) \overset{\gamma}{\longrightarrow} {\rm CO}(4) \times G \longrightarrow 0.
}
\begin{defn}
A ${\rm CSpin}^G(4)$-structure over $X$ is a principal ${\rm CSpin}^G(4)$-bundle $\widetilde{\pi}: \widetilde{Q} \rightarrow X$, which is an equivariant double cover of bundle $P_{{\rm CO}(4)} \times_X P$, with respect to the map $\gamma$.
\end{defn}
Let $\varphi$ and $\varphi'$ denote the Levi-Civita connections for metrics $g_{\sst X}$ and $g'_{\sst X} \in [g_{\sst X}]$ respectively. Fix a $\overline{G}$-connection $A$ on $P$. Then $A$ uniquely determines the connections $A_{\varphi}$ and $A_{\varphi '}$, which are lifts of $\varphi$ and $\varphi'$ to $\widetilde{Q}$. Then, as shown in subsection \ref{subsec: metric connections},
\eqst{
A_{\varphi '} - A_{\varphi} = \alpha \in C^{\infty}(\widetilde{Q}, \,(\R^4)^{\ast}\otimes \mf{g})^{{\rm Spin}^G(4)}.
}
Consequently, the covariant derivative of $u$, with respect to $A_{\varphi '}$ is
\eq{ \label{diffcovop}
D_{A_{\varphi'}}u = D_{A_{\varphi}}u + K^{M}_{\alpha}|_{u} \in C^{\infty}\big(\widetilde{Q}, (\R^{4})^{\ast}\otimes u^{\ast}TM \big)^{{\rm CSpin}^G(4)}.
}
Recall that $\swann$ admits a hyperk\"ahler potential $\rho_{0}$ and $\euler = \grad \rho_0$. For $\lambda \in \R\setminus \{0\}$,
\eqst{
\rho_{0}(e^{\lambda}x) = \frac{1}{2} g^{M}(\euler|_{e^{\lambda}x}, \euler|_{e^{\lambda}x}) = \frac{1}{2} ~ e^{2\lambda} g^{M}(\euler|_{x}, \euler|_{x}) = e^{2\lambda} \rho_{0}(x).
}
Therefore 
\eqst{
\frac{d}{dt} \rho_{0}(e^{2t \lambda}x)|_{t=0} = d\rho_{0} (\frac{d}{dt}(e^{2t \lambda}x)) = 2d\rho_{0} (K_{\lambda}^{M, \R^{+}}) |_{x} = g^{M}(\euler|_{x}, K_{\lambda}^{M, \R^{+}}|_{x}).
}
On the other hand
\eqst{
\frac{d}{dt} \rho_{0}(e^{2t \lambda}x)|_{t=0} = \frac{d}{dt}(e^{2t\lambda}) \rho_{0}(x) = 2\lambda \rho_{0}(x) = g_{M}(\euler|_{x},\euler|_{x}),
}
which implies that $K_{\lambda}^{M, \R^{+}} = \lambda \euler$.

\noindent We are now in a position to give the proof of Theorem \ref{thm: main thm 1}. But first, we need the following Lemma:

\begin{lem}
\label{gendirac derivative property}
For $f \in C^{\infty}(X, \R)$, we have 
\begin{equation}
\Dd_{A}(e^{-\pi_{1}^{*}f}u) = de^{-\pi_{1}^{*}f}~\Dd_{A}u - \pi_1^{\ast}df \bullet \euler \circ u,
\end{equation}
where $de^{-\pi_{1}^{*}f}$ denotes the differential of the action of $e^{-\pi_{1}^{*}f}$ on $TM$.
\end{lem}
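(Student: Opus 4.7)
The plan is to differentiate the composition $\Phi := \phi_{-\pi_1^{\ast}f}\circ u : \widetilde{Q}\to M$, where $\phi_t(x):=e^t\cdot x$ denotes the $\R^{+}$-factor of the $\HH^{\ast}$-action on the Swann bundle, and then to apply Clifford multiplication. Two structural facts will be used repeatedly. First, $\R^{+}$ is central in $\HH^{\ast}$, so $d\phi_t$ commutes with each complex structure $I_{\xi}$ and hence with Clifford multiplication by elements of $\HH$ (equivalently, the $\R^{+}$-action on $\swann$ is tri-holomorphic). Second, the tri-Hamiltonian $G$-action on $\swann$, being the Swann lift of the quaternionic K\"ahler $G$-action on $N$, preserves the cone structure and therefore commutes with $\R^{+}$; this gives the equivariance $K^{M}_{\eta}|_{\phi_{t}(x)} = d\phi_{t}\,K^{M}_{\eta}|_{x}$ for every $\eta \in \g$.

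With these in hand, for $v\in T_{q}\widetilde{Q}$ the ordinary product/chain rule yields
\[
d\Phi(v) \;=\; d\phi_{-\pi_{1}^{\ast}f(q)}\bigl(du(v)\bigr) \;-\; d(\pi_{1}^{\ast}f)(v)\cdot \frac{\partial}{\partial t}\phi_{t}(u(q))\Big|_{t=-\pi_{1}^{\ast}f(q)},
\]
and the $t$-derivative equals the value at $\Phi(q)$ of the fundamental vector field of the $\R^{+}$-generator, which by the computation $K^{M,\R^{+}}_{\lambda} = \lambda\,\euler$ recorded just above the lemma is $\euler|_{\Phi(q)}$. Using the equivariance identity for $K^{M}_{\eta}$ the same differential $d\phi_{-\pi_{1}^{\ast}f}$ absorbs the $K^{M}_{A}$-part of the covariant derivative as well, giving
\[
D_{A}\Phi(v) \;=\; d\phi_{-\pi_{1}^{\ast}f}\bigl(D_{A} u(v)\bigr) \;-\; d(\pi_{1}^{\ast}f)(v)\,\euler|_{\Phi(q)}.
\]
Clifford-multiplying by $e_{i}$, summing over the horizontal lifts $\tilde e_{i}$, and using that $d\phi_{t}$ commutes with $\bullet$ yields the claim once one recognises, under the identification of horizontal $1$-forms with $(\R^{4})^{\ast}$-valued functions via $\theta$ (as used already in Section~\ref{subsec: metric connections}), that $\sum_{i} f_{i}\,e^{i} \leftrightarrow \pi_{1}^{\ast}df$, and transports $\euler|_{\Phi(q)} = d\phi_{-\pi_{1}^{\ast}f}(\euler|_{u(q)})$ compatibly with the notation $de^{-\pi_{1}^{\ast}f}$.

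The main obstacle is the bookkeeping of fibres: $\Dd_{A}u$ is a section of $u^{\ast}W^{-}$ whereas $\Dd_{A}(e^{-\pi_{1}^{\ast}f}u)$ lives over $\Phi$, so every manipulation must respect the implicit identification $d\phi_{-\pi_{1}^{\ast}f}:u^{\ast}W^{\pm}\to \Phi^{\ast}W^{\pm}$. In particular, verifying that $d\phi_{t}$ commutes with Clifford multiplication reduces to the $\R^{+}$-action being tri-holomorphic on $\swann$, and verifying that the exponent-derivative really produces $\euler$ rather than a rescaled variant uses the defining property $\grad\rho_{0} = \euler$ of the hyperK\"ahler potential from Theorem~\ref{thm: swanns theorem}.
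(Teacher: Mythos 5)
Your proposal is correct and follows essentially the same route as the paper: differentiate the rescaled spinor by the chain rule, identify the $\R^{+}$-fundamental vector field with $\euler$ via the computation $K^{M,\R^{+}}_{\lambda}=\lambda\,\euler$ recorded just before the lemma, absorb the $K^{M}_{A}$-term using equivariance of the action under the central $\R^{+}$, and then apply Clifford multiplication. If anything, you are more careful than the paper about basepoints (tracking $\euler|_{\Phi(q)}$ versus $d\phi_{-\pi_{1}^{\ast}f}(\euler|_{u(q)})$) and about the commutation of $d\phi_{t}$ with $\bullet$, which the paper uses implicitly.
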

\begin{proof}
Let $p \in \widetilde{Q}$ and $v \in T_{p}\widetilde{Q}$. Let $\gamma:[0,1] \longrightarrow \widetilde{Q}$ be a curve in $\widetilde{Q}$ such that $\gamma(0) = p$ and $\dot{\gamma}(0) = v$. Evaluating the covariant derivative of $e^{-\pi_{1}^{*}f} u$ for $v$:
\eqst{
D_{A}(e^{-\pi_{1}^{*}f} u)(v) = d(e^{-\pi_{1}^{*}f}u)\,(v) + K^{M}_{A(v)}|_{e^{-\pi_{1}^{*}f(p)}u(p)}.
}
The first term of the above expression is
\alst{
d(e^{-\pi_{1}^{*}f} u)(v) 
&= \frac{d}{dt}\big(e^{-\pi_{1}^{*}f} u \big)(\gamma(t))|_{t=0} \\
&= \frac{d}{dt}\big(e^{-\pi_{1}^{*}f(\gamma(t))} u(\gamma(t)) \big)|_{t=0} \\
&= de^{-\pi_{1}^{*}f(p)}du(v) + K^{M}_{\left(-\pi_{1}^{*}df(v)\right)} |_{u(p)}\\
&= de^{-\pi_{1}^{*}f(p)}\, du(v) - \big\langle \sum_{i=1}^4 f_i\, e^i, \, \theta(v) \big\rangle \euler|_{u(p)}
}
and the second term is
\eqst{
K^{M}_{A(v)}|_{e^{-\pi_{1}^{*}f(p)}u(p)} = de^{-\pi_{1}^{*}f(p)} ~ K^{M}_{A(v)}|_{u(p)}.
}
In conclusion,
\eqst{
D_{A}(e^{-\pi_{1}^{*}f} u) = de^{-\pi_{1}^{*}f}~D_{A}u - \big\langle\sum_{i=1}^4 f_i\, e^i, \, \theta \big\rangle \otimes \euler \circ u.
}
Applying Clifford multiplication, proves the statement of the Lemma.
\end{proof}

\begin{proof}[\textbf{Proof of Theorem \ref{thm: main thm 1}}]
With respect to the metric $e^{2\pi^{*}f}g_{\sst X}$, the Clifford multiplication is given by $\bullet' = de^{-\pi_{1}^{*}f} \bullet$.
Substituting for $\alpha$ in \eqref{diffcovop} and applying the Clifford multiplication we get:
\eq{
\label{eq: thm.1 eq 1}
\Dd_{A_{\varphi'}}u 
= de^{-\pi_{1}^{*}f}\left(\Dd_{A_{\varphi}}u + \pi_1^{\ast}df \bullet \euler\circ u + \frac{1}{4} \langle\sum_{i < j} f_{i} e^{j},\, \theta\rangle \bullet K^{M}_{(e^{i}e^{j} - e^{j}e^{i})} |_{u}\right) 
}
Note that in using the identification $(\R^4)^{\ast} \cong \HH$, the element $(e^{i}e^{j} - e^{j}e^{i})$ belongs to the Lie algebra $\mf{sp}(1) \cong \mf{Im}(\HH)$ and has norm 1. Now recall from Theorem \ref{thm: swanns theorem} the vector field $\euler = - I_{\xi}K^M_{\xi}$ is independent of $\xi \in \mf{sp}(1)$. In particular when $\abs{\xi} = 1$, we get $I_{\xi}\euler = K^M_{\xi}$. Therefore, 
\eqst{
K^{M}_{(e^{i}e^{j} - e^{j}e^{i})} |_{u} = I_{(e^{i}e^{j} - e^{j}e^{i})} \euler\circ u = (e^{i}e^{j} - e^{j}e^{i}) \bullet \euler\circ u.
}
Substituting this in \eqref{eq: thm.1 eq 1}, we get
\alst{
\Dd_{A_{\varphi'}}u 
&= de^{-\pi_{1}^{*}f}\left(\Dd_{A_{\varphi}}u + \pi_1^{\ast}df \bullet\euler\circ u + \frac{1}{4} \langle\sum_{i < j} f_{i} e^{j}, \, \theta\rangle \bullet (e^{i}e^{j} - e^{j}e^{i}) \bullet \euler\circ u \right) \\
&= de^{-\pi_{1}^{*}f}\Big(\Dd_{A_{\varphi}}u + \pi_1^{\ast}df \bullet \euler\circ u + \frac{1}{4} \, \langle 4\sum_{i} f_{i}e^{i} - 2\sum_{i,j} f_{i}e^{j}\delta_{i,j} + 4\sum_{i} f_{i}e^{i}, \theta \rangle \bullet \euler\circ u \Big) \\
&= de^{-\pi_{1}^{*}f}\Big(\Dd_{A_{\varphi}}u + \pi_1^{\ast}df \bullet \euler\circ u + \frac{3}{2}\,\pi_{1}^{*}df \bullet \euler\circ u \Big).
}
Now observe that 
\alst{
\Dd_{A_{\varphi'}}(e^{-\pi_{1}^{*}f}u) 
&= de^{-\pi_{1}^{*}f} \left(de^{-\pi_{1}^{*}f} \, \Dd_{A_{\varphi}}u + \frac{3}{2} \, de^{-\pi_{1}^{*}f} \, \pi_{1}^{*}df \bullet \euler\circ u \right) \\ 
&= de^{-\pi_{1}^{*}f}\left(de^{-\frac{5}{2}\pi_{1}^{*}f} \, \Dd_{A_{\varphi}} \, (e^{\frac{3}{2} \pi_{1}^{*}f}u)\right).
}
Thus, in conclusion
\eq{
\label{conformal transformation of gen dirac operator}
\Dd_{A_{\varphi'}} (\scr{B}u) = \scr{B} \left(de^{-5/2\pi_{1}^{*}f} \, \Dd_{A_{\varphi}} \, (e^{3/2\pi_{1}^{*}f}u) \right).
}
\end{proof}

\section{Almost Hermitian geometry and generalised Seiberg-Witten}
In this section, we give the proof of Theorem \ref{thm: main thm 2}. Let the target hyperK\"ahler manifold $M$ be as in Section \ref{subsec: gen dirac and conf. metric}, but with $G= {\rm U}(1)$, so that $M$ now carries a permuting action of ${\rm Spin}^c(4)$. Moreover, let $\text{dim}\,M = 4$. Fix a ${\rm Spin}^c(4)$-structure $Q \rightarrow X$. In this section we restrict our attention to those $\swann$ which can be obtained by a hyperK\"ahler reduction of a flat, quaternionic space. Examples include nilpotent co-adjoint, orbits of complex semi-simple Lie groups, the moduli spaces of instantons on 4-manifolds, etc. We describe this set-up below.

Let $V$ be a finite-dimensional, Hermitian vector space and $\hk:= V \oplus V^{\ast}$. Then $\hk$ is a flat-hyperK\"ahler manifold.  Identifying $\hk$ with $\HH^n$, for some $n$, it is easy to see that $\hk$ carries a natural permuting action of ${\rm Sp}(1)$ given by multiplication by conjugate on the right. Consider the left action of ${\rm U}(1)$ on $\hk$
\eq{
\label{eq: U(1) action on hk}
z \cdot (v, w)  = (z\cdot v, z^{-1} \cdot w).
}
The action is tri-Hamiltonian, with a moment map
\eq{ \label{eq: moment map for S1 action}
\mu_{\R}(v,w) = \frac{1}{2} (\norm{v}^2 - \norm{w}^2), ~~ \mu_{\C}(v,w) = \pair{v,w}
}
Therefore, $\hk$ admits a permuting action of ${\rm U}(2)$. Suppose that another compact Lie group $G \subset U(n) \hookrightarrow Sp(n)$ has a tri-Hamiltonian action on $\hk$ that commutes with the ${\rm U}(2)$-action. Assume zero is a regular value of the $G$-moment map $\mu_{\mf{g}}: \hk \rightarrow \mf{sp}(1)^{\ast}\otimes \mf{g}^{\ast}$. Then, ${\rm U}(2)$ preserves the zero level set of $\mu_{\mf{g}}$ and therefore descends to a permuting action on the quotient $M:= \mu_{\mf{g}}^{-1}(0)/ G$. Put $\widehat{G}:= {\rm Spin}^c(4) \times G$.

\begin{rmk}
More generally, we can consider $\hk = \displaystyle \sum_{i=1}^k V_i \oplus V^{\ast}_i$, where each $V_i$ is a complex representation of ${\rm U}(2) \times G$, equipped with the tri-holomorphic action of ${\rm U}(1)$ by (weighted) left multiplication, so that it may happen that ${\rm U}(1)$ acts non-trivially on the first $\{V_l\}_{l=1}^m, ~ 1<m<k$ and trivially on the rest. However, we require that the image of the spinor be devoid of fixed points of the ${\rm U}(1)$-action. Therefore, we stick to the case where $\hk = V \oplus V^{\ast}$ and ${\rm U}(1) \hookrightarrow {\rm Sp}(n) \curvearrowright\hk$.
\end{rmk}

\subsection{Modified Seiberg-Witten equations}
By assumption $\mu_{\mf{g}}^{-1}(0)/G = M$. Let $P:=\mu^{-1}_{\mf{g}}(0)$ denote the ${\rm Spin}^c$-equivariant principal $G$-bundle over $M$.

\begin{wrapfigure}{r}{0.35\textwidth}
\begin{center}
\vspace{-6mm}
\hspace{-3mm}
\begin{tikzpicture}[->, node distance=2.25cm, auto, shorten >=1pt, every edge/.style={font=\footnotesize, draw}, fill=blue]
         \node (Hn+1)     {$Q$};
         \node (On+1) [right of=Hn+1]    {$M$};
         \node (HPn)  [above of=Hn+1]    {$\widehat{Q}$};
         \node (Xn-1) [right of=HPn]     {$P \subset \hk$};
         \node (X)    [below of=Hn+1]    {$X$};

         \draw        (Hn+1) -- node [left, midway] {$\pi$} (X);        
         \draw        (Hn+1) -- node [above, midway] {$u$} (On+1);
         \draw        (HPn)  -- node [left, midway] {$\pi_{1}$} (Hn+1);
         \draw        (Xn-1) -- node [right, midway] {$\pi_{2}$} (On+1);
         \draw        (HPn)  -- node [above] {$\widehat{u}$} (Xn-1); 
         
\end{tikzpicture}
\vspace{0.5cm}
\end{center}
\end{wrapfigure}
Consider a $\widehat{G}$-bundle $\widehat{Q} \rightarrow X$, as in the diagram. Given a smooth, equivariant map $\widehat{u}: \widehat{Q} \lra \hk$, such that $\mu_{\mf{g}} \circ \widehat{u} = 0$, define $u: Q \rightarrow M$ by $u(q) = \pi_2 (\widehat{u}(p)), ~~ q\in Q, ~~ p \in \pi^{-1}_1(q)$. 
Clearly then, $u$ is a ${\rm Spin}^c(4)$-equivariant map and the diagram commutes. On the other hand, given a smooth spinor $u: Q \lra M$, it defines a principal $\widehat{G}$-bundle over $X$, via pull-back of $P$ and canonically defines $\widehat{u}$, making the diagram commutative. In summary, 
\begin{lem}
There is a bijective correspondence between
\eqst{\{u \in C^{\infty}(Q, \, M)^{{\rm Spin}^c}\} \Longleftrightarrow \{\widehat{u}\in C^{\infty}(\widehat{Q}, \, \hk)^{\widehat{G}}~|~ \mu_{\mf{g}}\circ \widehat{u} = 0\}.
}
\end{lem}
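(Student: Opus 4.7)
My plan is to exhibit explicit maps in each direction and verify that they are mutually inverse. The whole argument rests on the universal property of the hyperK\"ahler quotient $M = P/G$ combined with the functoriality of the pullback construction.

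In the forward direction, suppose $\widehat{u}: \widehat{Q} \to \hk$ is $\widehat{G}$-equivariant with $\mu_{\mf{g}} \circ \widehat{u} = 0$; then its image lies in $P = \mu_{\mf{g}}^{-1}(0)$. Since $\widehat{G} = {\rm Spin}^c(4) \times G$ is a direct product, the map $\pi_1: \widehat{Q} \to Q$ realizes $Q$ as the quotient $\widehat{Q}/G$, so $\pi_1$ is a principal $G$-bundle that is simultaneously ${\rm Spin}^c(4)$-equivariant. For $q \in Q$ I choose any $p \in \pi_1^{-1}(q)$ and set $u(q) := \pi_2(\widehat{u}(p))$. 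Well-definedness holds because any two lifts of $q$ differ by the $G$-action, and the composition $\pi_2 \circ \widehat{u}$ is $G$-invariant by $G$-equivariance of $\widehat{u}$ together with the $G$-invariance of the quotient map $\pi_2: P \to M$. The ${\rm Spin}^c(4)$-equivariance of $u$ then follows directly from that of $\widehat{u}$, $\pi_1$, and $\pi_2$.

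In the backward direction, given a ${\rm Spin}^c(4)$-equivariant spinor $u: Q \to M$, I form the pullback $\widehat{Q} := u^{\ast}P$, which is naturally a principal $G$-bundle over $Q$. Because the $G$- and ${\rm Spin}^c(4)$-actions commute on $P$ and $u$ is ${\rm Spin}^c(4)$-equivariant, the ${\rm Spin}^c(4)$-action on $Q$ lifts diagonally to $\widehat{Q}$, making it a $\widehat{G}$-bundle over $X$. The canonical projection $\widehat{Q} = u^{\ast}P \to P \hookrightarrow \hk$ provides the map $\widehat{u}$, which is $\widehat{G}$-equivariant by construction and has image contained in $P$, hence satisfies $\mu_{\mf{g}} \circ \widehat{u} = 0$.

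That the two constructions are mutually inverse follows essentially from the universal property of the pullback: starting from $u$, the backward construction gives a tautological $\widehat{u}_{\mathrm{taut}}: u^{\ast}P \to \hk$ with $\pi_2 \circ \widehat{u}_{\mathrm{taut}} = u \circ \pi_1$, so the forward construction returns $u$. Conversely, starting from $(\widehat{Q}, \widehat{u})$, the map $p \mapsto (\pi_1(p), \widehat{u}(p))$ furnishes an isomorphism of $\widehat{G}$-bundles $\widehat{Q} \cong u^{\ast}P$ intertwining $\widehat{u}$ with the tautological map. The only genuinely non-formal input is that $\pi_1: \widehat{Q} \to Q$ is a principal $G$-bundle with quotient $Q$, which is precisely the direct-product structure of $\widehat{G}$; I expect no further obstacle.
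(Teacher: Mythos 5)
Your proposal is correct and takes essentially the same approach as the paper: the paper likewise defines $u(q) = \pi_2(\widehat{u}(p))$ for an arbitrary lift $p \in \pi_1^{-1}(q)$ (well-defined by $G$-equivariance, since $\mu_{\mf{g}} \circ \widehat{u} = 0$ forces the image of $\widehat{u}$ into $P$), and conversely recovers $(\widehat{Q}, \widehat{u})$ as the pullback bundle $u^{\ast}P$ with its tautological map. You simply make explicit the well-definedness, equivariance, and mutual-inverse checks (via the isomorphism $\widehat{Q} \cong u^{\ast}P$) that the paper compresses into ``clearly'' and ``canonically''.
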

\noindent Fix a connection $\ms{A}$ on $Q$. This is uniquely determined by the Levi-Civita connection on $X$ and a connection $\ms{b}$ on the determinant bundle $P_{{\rm U}(1)}$. The bundle $P \rightarrow M$ is a Riemannian submersion and therefore carries a canonical connection $\ms{a}$. This is defined as follows.
For $p \in P$, let $K^{P, G}_{\eta}|_p$ denote the fundamental vector field at $p$ due to $\eta \in \mf{g}$. For $v \in T_p P$, define $\ms{a}_p(v) \in \mf{g}$ be the unique element such that 
\eqst{
K^{P, G}_{\ms{a}}|_{p}(v) = K^{P, G}_{\ms{a}(v)}|_{p} = -\pr^{\text{im}K^{P, G}}(v)
}
where $\pr^{\text{im}K^{P, G}}$ denotes the orthogonal projection to the vertical sub-bundle, which is nothing but the image of the map 
\eqst{
K^{P, G}: \mf{g} \rightarrow TP, \,\,\,\,\, \eta \longmapsto K^{P, G}(\eta)|_p = K^{P, G}_{\eta}|_p.
}
The pull-back of this connection by $\widehat{u}$, along with the connection $\ms{A}$ on $Q$, uniquely determine a connection $\widehat{\ms{A}}$ on $\widehat{Q}$ (see \cite{victor})
\eq{
\label{eq:unique connection on P_H}
\widehat{\ms{A}} = \pi^{\ast} \ms{A} \oplus \widehat{\ms{A}}_{\mf{g}} \in \Lambda^{1}\left(\widehat{Q}, ~\widehat{\mf{g}}\right)^{\widehat{G}}, ~~ \widehat{\ms{A}}_{\mf{g}} = \widehat{u}^{\ast}\ms{a} - \langle \pi_1^{\ast} \ms{A}, \iota_{\mf{spin}^c} \widehat{u}^{\ast}\ms{a}  \rangle.
}
We can define a twisted Dirac operator $\Dd_{\widehat{\ms{A}}}$ acting on maps $\widehat{u}$.

\begin{prop}
\label{prop:1-1 correspondence}
Then, there is a 1-1 correspondence between
\eq{
\label{eq:1-1 correspondence harmonic spinors}
\{(\widehat{u}, \widehat{\ms{A}})~|~ \Dd_{\widehat{\ms{A}}}\widehat{u} = 0, ~~ \mu_{\mf{g}} \circ \widehat{u} = 0\}~~~\text{and}~~~\{(u, \ms{A})~|~ \Dd_{\ms{A}}u = 0\}.
}
Whenever $\Dd_{\widehat{\ms{A}}}\widehat{u} = 0, ~~ \mu_{\mf{g}} \circ \widehat{u} = 0$ and $\pr_{\mf{g}} \widehat{\ms{A}} = \widehat{\ms{A}}_{\mf{g}}$ as in \eqref{eq:unique connection on P_H} and therefore, $\widehat{\ms{A}}$ is uniquely determined by a ${\rm U}(1)$-connection $a$ on $P_{{\rm U}(1)}$.
\end{prop}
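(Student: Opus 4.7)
The preceding lemma already establishes a bijective correspondence between smooth equivariant maps $u \in C^\infty(Q,M)^{{\rm Spin}^c}$ and pairs $(\widehat Q,\widehat u)$ with $\widehat u \in C^\infty(\widehat Q,\hk)^{\widehat G}$ and $\mu_{\mf g}\circ\widehat u=0$. My strategy is to upgrade this to a bijection at the level of solutions by showing that the $\mf g$-component of $\widehat{\ms A}$ is rigidly determined by the Dirac and moment-map constraints, and must coincide with the canonical connection in \eqref{eq:unique connection on P_H}.

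\textbf{Forward direction ($(u,\ms A)\mapsto(\widehat u,\widehat{\ms A})$).} Given a solution $(u,\ms A)$, lift $u$ to $\widehat u$ as in the lemma and define $\widehat{\ms A}$ by the canonical formula \eqref{eq:unique connection on P_H}. Because $\pi_2\colon P\to M$ is a Riemannian submersion and $\ms a$ is the canonical connection, the horizontal lift of a frame on $X$ is compatible with the defining identity $K^{P,G}_{\ms a(v)}|_p=-\pr^{\mathrm{im}\,K^{P,G}}(v)$. Consequently, for each horizontal lift $\widetilde w$ in $\widehat Q$,
\eqst{
(\pi_2)_{\ast}\, D_{\widehat{\ms A}}\widehat u(\widetilde w) \;=\; D_{\ms A}u(\widetilde w),
}
and the vertical component of $D_{\widehat{\ms A}}\widehat u$ vanishes identically by construction of $\ms a$. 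Applying Clifford multiplication and summing over an orthonormal frame yields $(\pi_2)_\ast\,\Dd_{\widehat{\ms A}}\widehat u=\Dd_{\ms A}u$, and the vanishing of the vertical part shows that the two Dirac equations are equivalent.

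\textbf{Reverse direction (the main content).} Let $(\widehat u,\widehat{\ms A})$ satisfy $\Dd_{\widehat{\ms A}}\widehat u=0$ and $\mu_{\mf g}\circ\widehat u=0$. Decompose $\widehat{\ms A}=\pi_1^{\ast}\ms A\oplus \widehat{\ms A}_{\mf g}$, and let $\widehat{\ms A}^{\mathrm{can}}_{\mf g}$ denote the right-hand side of \eqref{eq:unique connection on P_H}. Set $\beta:=\widehat{\ms A}_{\mf g}-\widehat{\ms A}^{\mathrm{can}}_{\mf g}\in\Lambda^1(\widehat Q,\mf g)^{\widehat G}$. Linearity of the covariant derivative in the connection gives
\eqst{
D_{\widehat{\ms A}}\widehat u \;=\; D_{\widehat{\ms A}^{\mathrm{can}}}\widehat u \;+\; K^{\hk}_{\beta}\circ\widehat u,
}
where the first term is horizontal for $\pi_2$ by the forward direction and the second term is tangent to the $G$-orbit. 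Differentiating $\mu_{\mf g}\circ\widehat u=0$ shows that $D_{\widehat{\ms A}}\widehat u$ takes values in $T(\mu_{\mf g}^{-1}(0))$, so the Clifford sum decomposes cleanly into orbit-tangential and orbit-transverse pieces. Projecting the identity $\Dd_{\widehat{\ms A}}\widehat u=0$ onto the orbit direction and using that $\eta\mapsto K^{\hk}_\eta|_{\widehat u(p)}$ is injective (since $0$ is a regular value of $\mu_{\mf g}$ and the $G$-action is therefore free on $\mu_{\mf g}^{-1}(0)$) forces $\beta=0$. Hence $\widehat{\ms A}=\widehat{\ms A}^{\mathrm{can}}$ and the induced data $(u,\ms A)$ satisfy $\Dd_{\ms A}u=0$ by the forward direction. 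The connection $\widehat{\ms A}$ is thereby reconstructed from $\ms A$, which in turn is determined by the Levi-Civita connection on $X$ and the single ${\rm U}(1)$-connection $\ms b$ on $P_{{\rm U}(1)}$.

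\textbf{Expected obstacle.} The delicate step is the extraction of $\beta$ from the vertical part of $\Dd_{\widehat{\ms A}}\widehat u$: the relevant expression is a sum $\sum_i e_i\bullet K^{\hk}_{\beta(\widetilde{e_i})}\circ\widehat u$, and separating the $\mf g$-valued 1-form $\beta$ from this Clifford-weighted sum requires a non-degeneracy argument on the composition $\mf g\otimes(\R^4)^{\ast}\to T\hk$ at points of $\widehat u(\widehat Q)\subset\mu_{\mf g}^{-1}(0)$. The regular-value assumption on $\mu_{\mf g}$, together with the hypothesis that $\widehat u$ avoids ${\rm U}(1)$-fixed points (hence in particular $G$-fixed points, since $G\subset{\rm Sp}(n)$ acts freely on $\mu_{\mf g}^{-1}(0)$), provides exactly this injectivity.
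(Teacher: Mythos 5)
Your overall outline (lift/descend through the canonical connection on $P \to M$, with the content concentrated in showing the $\mf{g}$-part of $\widehat{\ms{A}}$ is forced to be canonical) matches the paper's, but the crux of your reverse direction has a genuine gap. Writing $D_{\widehat{\ms{A}}}\widehat{u}(\widetilde{e_i}) = H_i + K^{P,G}_{\beta(\widetilde{e_i})}$ with $H_i \in \Hh_{\widehat{u}}$, the Dirac sum is
\eqst{
\Dd_{\widehat{\ms{A}}}\widehat{u} \;=\; \Big(H_0 - \sum_{i=1}^{3} I_i H_i\Big) \;+\; K^{P,G}_{\beta(\widetilde{e_0})} \;-\; \sum_{i=1}^{3} I_i K^{P,G}_{\beta(\widetilde{e_i})},
}
and the moment-map identity $\langle d\mu_{\mf{g}}(Y), \xi \otimes \eta \rangle = \langle I_\xi K^{P,G}_\eta, Y \rangle$ shows that along $\mu_{\mf{g}}^{-1}(0)$ each $I_i K^{P,G}_{\beta(\widetilde{e_i})}$ is orthogonal to \emph{all} of $\ker d\mu_{\mf{g}} = \Hh \oplus \im K^{P,G}$. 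So Clifford multiplication rotates three of the four vertical contributions entirely out of $T\mu_{\mf{g}}^{-1}(0)$: your claim that the Clifford sum ``decomposes cleanly into orbit-tangential and orbit-transverse pieces'' is false as stated, and projecting $\Dd_{\widehat{\ms{A}}}\widehat{u} = 0$ onto the orbit direction yields only $\beta(\widetilde{e_0}) = 0$, not $\beta = 0$. Injectivity of $\eta \mapsto K^{P,G}_\eta$ (which does follow from the regular-value hypothesis; the avoidance of ${\rm U}(1)$-fixed points is irrelevant to this proposition --- it is needed only for Lemma \ref{lem: 4d unique cont.}) cannot by itself exclude cancellations among the rotated vertical vectors and the horizontal block, which is exactly the non-degeneracy you flagged as the ``expected obstacle'' but did not supply.

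The missing ingredient is precisely the paper's Step 1, which you also use tacitly in your forward direction: the identity $(\pi_2)_\ast \Dd_{\widehat{\ms{A}}}\widehat{u} = \Dd_{\ms{A}}u$ already requires that each $I_\xi$ preserve $\Hh_{\widehat{u}}$ along the zero set, i.e.\ $\iota^{\ast}I_i = \pi_2^{\ast}\tilde{I_i}$ on horizontal vectors. From the displayed moment-map identity one gets, along $\mu_{\mf{g}}^{-1}(0)$: (i) $I_\xi \Hh = \Hh$; (ii) $I_\xi\, \im K^{P,G} \perp \ker d\mu_{\mf{g}}$; and, via the quaternion relations, that $\im K^{P,G}, I_1 \im K^{P,G}, I_2 \im K^{P,G}, I_3 \im K^{P,G}$ are mutually orthogonal. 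Then the three blocks in the display above are mutually orthogonal, so $\Dd_{\widehat{\ms{A}}}\widehat{u} = 0$ forces each to vanish separately; pairing with $K^{P,G}_\eta$ and with $I_j K^{P,G}_\eta$ isolates each $\beta(\widetilde{e_i})$, and injectivity of $K^{P,G}$ gives $\beta = 0$ together with $\Dd_{\widehat{\ms{A}}^{\mathrm{can}}}\widehat{u} = 0$, which projects to $\Dd_{\ms{A}}u = 0$. With this inserted your proof closes and becomes essentially a repackaging of the paper's argument, which avoids introducing $\beta$ altogether by first proving (i)--(ii) and then deducing horizontality of all $D_{\widehat{\ms{A}}}\widehat{u}(\widetilde{e_i})$ from the Dirac equation, so that the canonical form \eqref{eq:unique connection on P_H} of $\widehat{\ms{A}}_{\mf{g}}$ is read off from the vanishing vertical projection.
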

\begin{proof}

For $h\in P$ such that $\mu_{\mf{g}}(h) = 0$, define $\Hh_h := \ker d\mu_{\mf{g}}(h) \cap (\im K^{P, G})^{\perp}$. This is just the horizontal subspace over $h$ with respect to the canonical connection $\ms{a}$. 

We will prove the proposition in two steps. In what follows, we shall denote the $G$ and $Spin^c$-components of $\widehat{\ms{A}}$ by $\widehat{\ms{A}}_{\mf{g}}$ and $\ms{A}$ respectively.

\vspace{0.3cm}
\paragraph{\bf Step 1:}
\label{para:step 1}
In the first step we will prove that $I_{\xi}D_{\widehat{\ms{A}}}\widehat{u}(v) \in \Hh_{\widehat{u}}$ for every $\xi \in \mf{sp}(1)$ and $v \in \Hh_{\widehat{\ms{A}}} \subset T \widehat{Q}$. Indeed, if $\mu_{\mf{g}} \circ \widehat{u} = 0$, then $d\widehat{u}(v) \in \ker d\mu_{\mf{g}}(\widehat{u}(p))$. Also, $K^{P, G}_{\widehat{\ms{A}}_{\mf{g}}}|_{\widehat{u}} \in \ker d\mu_{\mf{g}}(\widehat{u}(p))$ and $K^{P, {\rm Spin}^c}_{\widehat{\ms{A}}}|_{\widehat{u}} \in \ker d\mu_{\mf{g}}(\widehat{u}(p))$. Therefore, $D_{\widehat{\ms{A}}}\widehat{u}(v) \in \ker d\mu_{\mf{g}}(\widehat{u}(p))$. Consequently
\eqst{
0
=\langle d\mu_{\mf{g}} (D_{\widehat{\ms{A}}}\widehat{u}(v)), \xi \otimes \eta\rangle 
= \langle I_{\xi} K^{P, G}_{\eta}|_{\widehat{u}(p)}, D_{\widehat{\ms{A}}}\widehat{u}(v) \rangle
= - \langle K^{P, G}_{\eta}|_{\widehat{u}(p)}, I_{\xi} D_{\widehat{\ms{A}}}\widehat{u}(v) \rangle
}
for $\xi \in \mf{sp}(1), ~ \eta \in {\mf{g}}$ and so $I_{\xi} D_{\widehat{\ms{A}}}\widehat{u}(v) \in (\im K^{P, G})^{\perp}$ for all $\xi \in \mf{sp}(1)$. For $\xi' \in \mf{sp}(1)$,
\eqst{
\langle d\mu_{\mf{g}} (I_{\xi}D_{\widehat{\ms{A}}}\widehat{u}(v)), \xi' \otimes \eta\rangle = \langle d\mu_{\mf{g}} (D_{\widehat{\ms{A}}}\widehat{u}(v)), [\xi, \xi'] \otimes \eta\rangle = 0
}
which implies $I_{\xi} D_{\widehat{\ms{A}}}\widehat{u}(v) \in \ker d\mu_G(\widehat{u}(p))$ for all $\xi \in \mf{sp}(1)$. Thus, $I_{\xi} D_{\widehat{\ms{A}}}\widehat{u}(v) \in \Hh_{\widehat{u}}$.

\vspace{0.3cm}

\paragraph{\bf Step 2:}
\label{para:step 2}
In this step, we prove the equivalence \eqref{eq:1-1 correspondence harmonic spinors}. If $\Dd_{\widehat{\ms{A}}} \widehat{u} = 0$, then from \eqref{eq:gen. dirac operator explicit}, we have
\eqst{
0 = D_{\widehat{\ms{A}}}\widehat{u}(\tilde{e_0}) - \sum^3_{i=1} I_{i}D_{\widehat{\ms{A}}} \widehat{u}(\tilde{e_i})
}

From Step 1, $D_{\widehat{\ms{A}}}\widehat{u}(\tilde{e_0}) \in \Hh_{\widehat{u}}$. It follows that $D_{\widehat{\ms{A}}}\widehat{u}(\tilde{e_i}) \in \Hh_{\widehat{u}}$ for all $i=1,2,3$. Consequently, for any $v \in \Hh_{\widehat{\ms{A}}}, ~~\pr^{\im K^{P, G}}D_{\widehat{\ms{A}}}\widehat{u}(v) = 0$ and we get $\displaystyle K^{P, G}_{\widehat{\ms{A}}_{\mf{g}}(v)} = - \pr^{\im K^{P, G}} d\widehat{u}(v)$. In other words, the ${\mf{g}}$-connection component of $\widehat{\ms{A}}$ is just the pull-back of the canonical connection on $P$. Since the diagram commutes, $d\pi_2(D_{\widehat{\ms{A}}}\widehat{u}) = D_{\ms{A}}u$. Also, as $D_{\widehat{\ms{A}}}\widehat{u}(\tilde{e_i}) \in \Hh_{\widehat{u}}$ for all $i=0, 1,2,3$, we have $\iota^{\ast}I_{i} = \pi^{\ast}_2 \tilde{I_{i}}$ and so,
\eqst{
0
=d\pi_2 (\Dd_{\widehat{\ms{A}}}\widehat{u})
= d\pi_2 \left(D_{\widehat{\ms{A}}}\widehat{u}(\tilde{e_0}) - \sum^3_{i=1} \iota^{\ast}I_{i}~D_{\widehat{\ms{A}}} \widehat{u}(\tilde{e_i})\right)
= \Dd_{\ms{A}}u
}
Thus, $\Dd_{\widehat{\ms{A}}}\widehat{u} = 0$ implies $\Dd_{\ms{A}}u = 0$. On the other hand if $\displaystyle K^{P, G}_{\widehat{\ms{A}}_{{\mf{g}}(v)}} = - \pr^{\im K^{P, G}} d\widehat{u}(v)$ then $D_{\widehat{\ms{A}}}\widehat{u} \in \Hh_{\widehat{u}}$ and so $d\pi_2 (\Dd_{\widehat{\ms{A}}}\widehat{u}) = \Dd_{\ms{A}}u$. Therefore, if $\Dd_{\ms{A}}u = 0$, it implies that $\Dd_{\widehat{\ms{A}}}\widehat{u} \in \im K^{P, G}$. But since, 
\eqst{
\Dd_{\widehat{\ms{A}}}\widehat{u}
= D_{\widehat{\ms{A}}}\widehat{u}(\tilde{e_0}) - \sum^3_{i=1} \pi^{\ast}_2 \tilde{I_{i}}~D_{\widehat{\ms{A}}} \widehat{u}(\tilde{e_i}) \in \Hh_{\widehat{u}}
}
it follows that $\Dd_{\widehat{\ms{A}}}\widehat{u} \in (\im K^{P, G})^{\perp}$ and so $\Dd_{\widehat{\ms{A}}}\widehat{u} = 0$. This proves the statement. 
\end{proof}
With this observation, it is now easy to construct a ``lift" of the equations as follows.

\begin{prop}
\label{prop: 1-1 correspondence of equations}
Fix a connection $a$ on $P_{{\rm U}(1)}$. There is a 1-1 correspondence between the following systems of equations
\eq{\label{eq: modified sw}
\left\{
    \begin{array}{lcl}
      \Dd_{\widehat{\ms{A}}}\widehat{u}= 0 \\      
      F^+_{\ms{b}} - \mu\circ \widehat{u} = 0 \\
      \mu_{\mf{g}} \circ \widehat{u} = 0
    \end{array}
  \right. ~~\text{and}~~~~ \left\{
    \begin{array}{lcl}
     \Dd_{\ms{A}}u= 0 \\      
      F^+_{\ms{b}} - \mu\circ u = 0 
    \end{array}
  \right.}
where $\mu: \hk \rightarrow \rm{i}\R$ denotes the moment map for ${\rm U}(1)$-action on $\hk$. 
\end{prop}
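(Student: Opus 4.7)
The plan is to build directly on Proposition \ref{prop:1-1 correspondence}, which already supplies the bijection between Dirac-harmonic pairs $(\widehat u, \widehat{\ms A})$ with the constraint $\mu_{\mf g} \circ \widehat u = 0$ and Dirac-harmonic pairs $(u, \ms A)$. Only two additional verifications are needed: first, that the self-dual curvature equation literally transfers across the identification, and second, that the moment-map source $\mu \circ \widehat u$ coincides with $\mu \circ u$ under the correspondence. The former is essentially automatic, because the decomposition \eqref{eq:unique connection on P_H} expresses $\widehat{\ms A}$ as the ${\rm Spin}^c(4)$-connection $\ms A$ (and hence its ${\rm U}(1)$-component $\ms b$) together with a $\mf g$-component determined by $\widehat u$; thus $\ms b$, and therefore $F^+_{\ms b}$, is common data to both systems.

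The substantive step is the descent of the moment map. Since the ${\rm U}(1)$-action \eqref{eq: U(1) action on hk} on $\hk$ commutes with the $G$-action and both are tri-Hamiltonian, the ${\rm U}(1)$-moment map $\mu$ is $G$-invariant along $\mu_{\mf g}^{-1}(0)$; by the general hyperK\"ahler reduction formalism of \cite{hklr}, it descends to a tri-Hamiltonian moment map for the induced ${\rm U}(1)$-action on the quotient $M = \mu_{\mf g}^{-1}(0)/G$. Under the projection $\pi_2: P \to M$ one therefore has $\mu = [\mu] \circ \pi_2$, and since $u = \pi_2 \circ \widehat u$ by construction, it follows that $\mu \circ \widehat u = \mu \circ u$ as equivariant $\mf{sp}(1)^{\ast}$-valued functions.

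With these two observations the proof becomes mechanical. In the forward direction, given $(\widehat u, \widehat{\ms A})$ solving the left system, Proposition \ref{prop:1-1 correspondence} produces $(u, \ms A)$ with $\Dd_{\ms A} u = 0$, and the curvature equation $F^+_{\ms b} = \mu \circ u$ follows by applying $\mu \circ \widehat u = \mu \circ u$ to the second equation of the left system. Conversely, given $(u, \ms A)$ solving the right system, Proposition \ref{prop:1-1 correspondence} furnishes the lift $\widehat u$ and the unique $\widehat{\ms A}$ with $\mf g$-component equal to $\widehat u^{\ast} \ms a - \langle \pi_1^{\ast}\ms A, \iota_{\mf{spin}^c}\widehat u^{\ast}\ms a\rangle$ as in \eqref{eq:unique connection on P_H}; the constraint $\mu_{\mf g} \circ \widehat u = 0$ is automatic since $\widehat u$ lands in $P = \mu_{\mf g}^{-1}(0)$, and reading the moment-map identity backwards converts $F^+_{\ms b} = \mu \circ u$ into $F^+_{\ms b} = \mu \circ \widehat u$. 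I expect the only delicate point to be the descent of $\mu$ to $M$, but this is a standard consequence of commuting tri-Hamiltonian actions and requires no further argument beyond citing the reduction theorem.
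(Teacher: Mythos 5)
Your proposal is correct and takes essentially the same route as the paper: the paper offers no separate proof, presenting the proposition as an immediate consequence of Proposition~\ref{prop:1-1 correspondence} together with the remark that the tri-Hamiltonian ${\rm U}(1)$-action and its moment map descend to $M = \mu_{\mf{g}}^{-1}(0)/G$ (a correspondence it attributes to Pidstrygach and Haydys), which are exactly the two verifications you carry out. Your observation that $\ms{b}$, and hence $F^+_{\ms{b}}$, is common data because $\widehat{\ms{A}}$ is determined by $\ms{A}$ via \eqref{eq:unique connection on P_H} matches the paper's use of the uniqueness clause of Proposition~\ref{prop:1-1 correspondence}, and the $G$-invariance of $\mu$ you invoke is precisely what the paper asserts later when showing $B(F^+_{\widehat{\ms{A}}_{\mf{g}}} \bullet \widehat{u}, \widehat{u}) = 0$.
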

Since the tri-Hamiltonian action of ${\rm U}(1)$ descends to $M$, we denote the ${\rm U}(1)$-moment map by $\mu$ itself. The above correspondence was independently obtained by Pidstrygach \cite{pidstrygach2006} and also by Haydys \cite{haydys2012} (Prop. 4.5 and Thm. 4.6).

\subsection{Almost-complex geometry and generalised Seiberg-Witten}
In this subsection, we give a proof of Theorem \ref{thm: main thm 2}. It exploits the equivalence \eqref{eq: modified sw} and Theorem \ref{thm: main thm 1}. 
Firstly, note that the generalised Seiberg-Witten are not conformally invariant. On the other hand, from Theorem \ref{thm: main thm 1}, we know that the space of harmonic, generalised spinors is conformally invariant. It follows that there is 1-1 correspondence between the solutions $(\widehat{u}', \widehat{\ms{A}}')$ of the system \eqref{eq: modified sw} with respect to the metric $g'_{\sst X} \in [g_{\sst X}]$, such that image of $\widehat{u}$ does not contain a fixed point of the ${\rm U}(1)$-action on $\hk$, and the triples $(g''_{\sst X}, \widehat{u}'', \widehat{\ms{A}}'')$ such that $\abs{\mu\circ \widehat{u}''} = 1$ and $(\widehat{u}'', \widehat{\ms{A}}'')$ satisfy the equations
\eq{
\label{eq: conformally rescaled gen sw}
\left\{
    \begin{array}{lcl}
      \Dd_{\widehat{\ms{A}}''}\widehat{u}'' = 0\\      
      F^+_{\ms{b}} - \lambda\mu\circ \widehat{u}'' = 0 \\
      \mu_{\mf{g}} \circ \widehat{u}'' = 0
    \end{array}
\right.}
where is a strictly positive function given by $\lambda = \abs{\mu\circ u}^{-1}$. To see the correspondence, choose $g''_{\sst X} = \abs{\mu\circ\widehat{u}'}^{-4/3} g'_{\sst X}$. Then $u'' = \abs{\mu\circ\widehat{u}'}^{-1/2} u'$. By virtue of Theorem \ref{thm: main thm 1}, $u''$ is harmonic and the third equation of \eqref{eq: modified sw} remains invariant under the conformal scaling.  Moreover, $\lambda\mu\circ\widehat{u}'' = \mu\circ \widehat{u}'$. The said correspondance follows from the map $(u', \ms{A}') \mapsto (u'', \ms{A}')$.

Suppose we are given a triple $(g''_{\sst X}, \widehat{u}, \widehat{\ms{A}})$ satisfying \eqref{eq: conformally rescaled gen sw} and $\abs{\mu\circ \widehat{u}} = 1$. Then $\Omega = \Phi(\mu \circ \widehat{u})$ is a non-degenerate, self-dual 2-form on $X$, where $\Phi:\mf{sp}(1)^{\ast} \longrightarrow \Lambda^2_+(\R^4)^{\ast}$ is the isomorphism, and defines an almost-complex structure on $X$. 

\begin{lem}
\label{lem: 4d unique cont.}
Suppose that the target hyperK\"ahler manifold $M$ is 4-dimensional. Let $\ms{A}_0$ be a fiducial connection on $Q$ and $u$ be a spinor such that the range  of $u$ does not contain a fixed point of the ${\rm U}(1)$-action on $M$. Then there exists a unique 1-form $\ms{a}_0$ on $X$ such that $\Dd_{\ms{A}}u = 0$, where $\ms{A} = \ms{A}_0 + \imag\ms{a}_0$.
\end{lem}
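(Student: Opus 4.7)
The plan is to view $\Dd_{\ms{A}} u = 0$ as a pointwise, inhomogeneous, $\R$-linear equation for $\ms{a}_0$ and to show that, under the stated non-vanishing hypothesis, the associated linear operator is fibrewise invertible. Writing $\ms{A} = \ms{A}_0 + \imag\, \ms{a}_0$ and using \eqref{covariant derivative}, the $\mr{U}(1)$-fundamental vector field enters linearly,
\begin{equation*}
D_{\ms{A}} u(\tilde{e_i}) \;=\; D_{\ms{A}_0} u(\tilde{e_i}) \;+\; \ms{a}_0(\tilde{e_i})\, K^{M}_{\imag}\big|_{u},
\end{equation*}
so applying Clifford multiplication and summing as in \eqref{eq:gen. dirac operator explicit} gives
\begin{equation*}
\Dd_{\ms{A}} u \;=\; \Dd_{\ms{A}_0} u \;+\; \sum_{i=0}^{3} \ms{a}_0(\tilde{e_i})\, \bigl(e_i \bullet K^{M}_{\imag}\big|_{u}\bigr).
\end{equation*}
Hence $\Dd_{\ms{A}} u = 0$ reduces at each point $p \in Q$ to inverting the $\R$-linear map
\begin{equation*}
L_p \colon \R^4 \longrightarrow W^{-}\big|_{u(p)}, \qquad v \longmapsto \sum_{i=0}^{3} v_i \bigl(e_i \bullet K^{M}_{\imag}\big|_{u(p)}\bigr),
\end{equation*}
against $-\Dd_{\ms{A}_0} u(p)$.

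The second step is to verify that $L_p$ is an isomorphism precisely when $K^{M}_{\imag}|_{u(p)} \neq 0$. Under the identification $\R^4 \cong \HH$ with basis $(1,\bar{i},\bar{j},\bar{k})$, the Clifford action $e_i \bullet$ on $W^{+} \cong TM$ is, up to the sign coming from conjugation, given by $I_1, I_2, I_3$ and the identity. So the four images $\{e_i \bullet K^{M}_{\imag}|_{u(p)}\}_{i=0}^{3}$ are, up to signs, $K^{M}_{\imag}|_{u(p)}$ together with its images under $I_1, I_2, I_3$. Since the $I_j$ are skew-adjoint and satisfy the quaternionic relations, the quadruple $(w, I_1 w, I_2 w, I_3 w)$ is $g_{\sst M}$-orthogonal for any $w \neq 0$. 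Because $\dim M = 4$, these four vectors form a basis of $T_{u(p)} M \cong W^{-}|_{u(p)}$, and therefore $L_p$ is a linear isomorphism. The assumption that the range of $u$ avoids the fixed-point set of the $\mr{U}(1)$-action is exactly the statement that $K^{M}_{\imag}|_{u(p)} \neq 0$ everywhere, so $L_p$ is invertible at every point of $Q$.

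Having established fibrewise invertibility, the construction of $\ms{a}_0$ is immediate: define, as a $(\R^4)^{\ast}$-valued function on $Q$,
\begin{equation*}
\ms{a}_0(p) \;:=\; -\,L_p^{-1}\bigl(\Dd_{\ms{A}_0} u(p)\bigr),
\end{equation*}
and interpret $\ms{a}_0(\tilde{e_i})(p)$ as its $i$-th component. Uniqueness is then forced pointwise by the invertibility of $L_p$. All ingredients entering the definition of $L_p$, namely $u$, the Clifford map \eqref{eq:clifford multiplication}, and the $\mr{U}(1)$-fundamental vector field, are $\mr{Spin}^c(4)$-equivariant, so the family $\{L_p\}$ is equivariant and hence so is $p \mapsto L_p^{-1}$; this ensures that $\ms{a}_0$ is an equivariant $(\R^4)^{\ast}$-valued function on $Q$, i.e.\ descends to a genuine smooth 1-form on $X$.

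The only subtlety in the argument is the equivariance bookkeeping needed in this last step, together with the choice of conventions ensuring that the four vectors $e_i \bullet K^{M}_{\imag}|_{u(p)}$ really do span $T_{u(p)} M$ (and are not, say, annihilated by a sign coming from the $W^{+} \leftrightarrow W^{-}$ flip). Both points are mechanical once the identification of Clifford multiplication with the quaternionic structure on $TM$ is made explicit; the substantive input is the quaternionic orthogonality $\{w, I_1 w, I_2 w, I_3 w\}$ for $w \neq 0$, which crucially uses $\dim M = 4$.
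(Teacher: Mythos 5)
Your proposal is correct and follows essentially the same route as the paper: linearise $\Dd_{\ms{A}}u$ in $\ms{a}_0$ via $\Dd_{\ms{A}}u = \Dd_{\ms{A}_0}u + \ms{a}_0 \bullet K^M_{\imag}|_u$, then invert pointwise using the non-vanishing of $K^M_{\imag}|_u$. Your fibrewise-isomorphism argument via the orthogonal quadruple $\{w, I_1 w, I_2 w, I_3 w\}$ is just an unwound version of the paper's identification of Clifford multiplication with (invertible) quaternionic multiplication, and your explicit equivariance check merely fills in a step the paper leaves implicit.
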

\begin{proof}
Observe that $\Dd_{\ms{A}}u = \Dd_{\ms{A}_0}u + \sum_{i=0}^3 e^i \bullet K^M_{\imag\ms{a}_0(\widetilde{e_i})}|_u$.  At a point $q \in Q$,
\eqst{
K^M_{\imag\ms{a}_0\,(\widetilde{e_i}(q))} \,|_{u(q)} \, = \, \frac{d}{dt} \exp\left(\,\imag \, t\, \ms{a}_0(\widetilde{e_i}(q))\right) u(q)|_{t=0} \, = \, \left(\ms{a}_0(\widetilde{e_i}(q))\right) K^M_{\imag}|_{u(q)}.
}
Therefore 
\alst{
\Dd_{\ms{A}}u(q)
& = \Dd_{\ms{A}_0}u(q) + \sum_{i=0}^3 \left(\ms{a}_0(\widetilde{e_i}(q)) e^i \right) \bullet K^M_{\imag}|_{u(q)} \\
& = \Dd_{\ms{A}_0}u(q) + \ms{a}_0(q) \bullet K^M_{\imag}|_{u(q)}.
}
Suppose that $\Dd_{\ms{A}}u = 0$. Then, we need to solve the equation
\eqst{
- \Dd_{\ms{A}_0}u = \ms{a}_0 \bullet K^M_{\imag}|_{u}.
}
Point-wise, we can choose identification of $T_{u(q)} M$ and $\R^4$ with quaternions, such that the Clifford multiplication is just the usual quaternionic multiplication. Since the image of $u$ does not contain a fixed point of the ${\rm U}(1)$ action on $M$, $K^M_{\imag}|_{u}$ is a non-vanishing, equivariant section of $u^{\ast}TM \rightarrow Q$. The statement of the Lemma follows.
\end{proof}
In essence, this translates to saying that given a non-vanishing spinor $\widehat{u}$ such that $\mu_{\mf{g}}\circ \widehat{u} = 0$, then there exists a unique 1-form $\ms{a}_0$ on $X$ such that $\Dd_{\widehat{\ms{A}}}\widehat{u} = 0$. Therefore, the connection $\widehat{A}$ is entirely determined by $\widehat{u}$ and hence by the almost complex structure $\Omega = \Phi(\mu\circ \widehat{u})$.

Let $B: \hk \times \hk \longrightarrow \mf{sp}(1)$ denote the symmetric (real) bi-linear form associated to the ${\rm U}(1)$-moment map and $\widetilde{B}$ denote the induced map on $(T^{\ast}X \otimes \hk) \times (T^{\ast}X \otimes \hk)$, obtained using contraction furnished by the Riemannian metric on $X$. Then, $\Omega = B(\widehat{u}, \widehat{u})$ and so
\alst{
\nabla^{\ast}\nabla \Omega
&= 2 \left(B(D^{\ast}_{\widehat{\ms{A}}}D_{\widehat{\ms{A}}}\widehat{u}, \, \widehat{u}) - \widetilde{B}(D_{\widehat{\ms{A}}}\widehat{u}, \, D_{\widehat{\ms{A}}}\widehat{u})\right)}

Applying the Weitzenb\"ock formula 
\eq{ \label{eq: weitzenbock for modified dirac}
\Dd^{\ast}_{\widehat{\ms{A}}}\Dd_{\widehat{\ms{A}}}\widehat{u} = D^{\ast}_{\widehat{\ms{A}}}D_{\widehat{\ms{A}}} \widehat{u} + \frac{s_{\sst X}(g''_{\sst X})}{4} \widehat{u} + F^+_{\ms{b}} \bullet \widehat{u} + F^+_{\widehat{\ms{A}}_{\mf{g}}} \bullet \widehat{u}
}
gives
\eqst{\nabla^{\ast}\nabla \Omega
= -\frac{s_{\sst X}(g''_{\sst X})}{2}  \Omega - B(F^+_{\widehat{\ms{A}}_{\mf{g}}} \bullet \widehat{u}, \widehat{u}) - B(F^+_{\ms{b}} \bullet \widehat{u}, \widehat{u}) - 2 \widetilde{B}(D_{\widehat{\ms{A}}}\widehat{u}, D_{\widehat{\ms{A}}}\widehat{u})
}
We claim that the term $B(F^+_{\widehat{\ms{A}}_{\mf{g}}} \bullet \widehat{u}, \widehat{u})$ vanishes. This follows from the following Lemma:
\begin{lem}
Assume that $\mu_{\mf{g}}(h) = 0$ and let $\xi \in \mf{sp}(1)$ and $\eta \in \mf{g}$. Then 
\eqst{
B(\widehat{u}, ~\eta~\widehat{u}~\overline{\xi}) = 0
}
\end{lem}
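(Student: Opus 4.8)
The plan is to identify $B$ with one half of the differential of the gauge ${\rm U}(1)$-moment map, and then to split the resulting pairing into a part that dies by commutativity of the two actions and a single residual term that is controlled by the hypothesis $\dim_{\R}M=4$.

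Since $\Omega=B(\widehat{u},\widehat{u})=\mu\circ\widehat{u}$, where $\mu$ is the gauge ${\rm U}(1)$-moment map, polarisation gives $B(x,y)=\tfrac{1}{2}\,d\mu_x(y)$, or explicitly $B(x,y)=\tfrac{1}{2}\img\langle\imag x,y\rangle$, with $\imag x=K^{\hk}_{\imag}|_x$ the gauge fundamental vector field and $\langle\cdot,\cdot\rangle$ the quaternionic Hermitian form. First I would rewrite the second slot as $\eta\widehat{u}\,\bar{\xi}=-I_{\xi}(\eta\widehat{u})=-I_{\xi}K^{\hk,G}_{\eta}|_{\widehat{u}}$ and read off the $\zeta$-component of $B(\widehat{u},\eta\widehat{u}\bar{\xi})$ through the moment-map identity $d\langle\mu,\zeta\rangle=\iota_{K^{\hk}_{\imag}}\omega_{\zeta}$. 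This produces $-\tfrac{1}{2}g\big(I_{\zeta}K^{\hk}_{\imag}|_{\widehat{u}},\,I_{\xi}K^{\hk,G}_{\eta}|_{\widehat{u}}\big)$, and after using $I_{\xi}^{-1}I_{\zeta}=\pm I_{(\cdots)}$ the whole statement reduces to the four scalar identities $g\big(I_m K^{\hk}_{\imag}|_{\widehat{u}},\,K^{\hk,G}_{\eta}|_{\widehat{u}}\big)=0$ for $m=0,1,2,3$, for arbitrary $\eta\in\mf{g}$.

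The three terms with $m=1,2,3$ vanish everywhere, purely from commutativity. Because the gauge ${\rm U}(1)$- and $G$-actions commute and $G$ is tri-holomorphic (so it fixes $\mf{sp}(1)$ pointwise), $\mu_{\mf g}$ is constant along the gauge flow; differentiating this invariance in the gauge direction and invoking $d\langle\mu_{\mf g},\zeta\otimes\eta\rangle=\iota_{K^{\hk,G}_{\eta}}\omega_{\zeta}$ gives $g\big(I_m K^{\hk,G}_{\eta},K^{\hk}_{\imag}\big)=0$, which by skew-symmetry of $I_m$ is the desired $g\big(I_m K^{\hk}_{\imag},K^{\hk,G}_{\eta}\big)=0$, for every imaginary index. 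This is just the Poisson-commutativity of the two moment maps and uses neither $\mu_{\mf g}(\widehat{u})=0$ nor four-dimensionality.

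The hard part will be the remaining term $g\big(K^{\hk}_{\imag}|_{\widehat{u}},K^{\hk,G}_{\eta}|_{\widehat{u}}\big)$, the honest inner product of the gauge and the $G$ Killing fields; this is the only place where $\mu_{\mf g}(\widehat{u})=0$ and $\dim_{\R}M=4$ must both enter. In the flat model $\hk=V\oplus V^{\ast}$ the gauge generator acts as $\imag\cdot\mathrm{id}$, so this number is (twice) the value of the $V$-component Kähler moment map $\mu_V(v)$; equivalently it is the $G$-moment for the complex structure $L_{\imag}$ coming from the gauge action, which is \emph{not} one of the hyperKähler structures $I_1,I_2,I_3$, so $\mu_{\mf g}(\widehat{u})=0$ does not annihilate it directly. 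I would combine the real equation $\mu^{\R}_{\mf g}(\widehat{u})=0$ (which only relates $\mu_V(v)$ to $\mu_{V^{\ast}}(w)$) with the holomorphic equation $\mu^{\C}_{\mf g}(\widehat{u})=0$, and then use that $M=\mu_{\mf g}^{-1}(0)/G$ is a four-dimensional hyperKähler cone to force $\mu_V(v)=0$ on its own — already for $G={\rm U}(1)$, $V=\C^2$ this comes down to the strict monotonicity of $r\mapsto r^{2}-c^{2}/r^{2}$. I expect this diagonal term to be the genuine obstacle: it is essentially four-dimensional, since the very same inner product is nonzero once $\dim M>4$ (for instance for $G={\rm U}(1)$ acting with weights $(2,-1,-1)$ on $\C^3$), so any correct proof must exploit four-dimensionality precisely at this step.
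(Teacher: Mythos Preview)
Your decomposition into the three ``imaginary'' pieces ($m=1,2,3$) and the diagonal piece ($m=0$) is correct, and the imaginary pieces do vanish by $G$-invariance of the ${\rm U}(1)$-moment map exactly as you argue. The paper's proof stops at precisely that point: its one-line hint (``$G$-invariance of $\mu$; differentiate $t\mapsto B(u,\exp(t\eta)u\,\bar\xi)$'') produces only those three vanishings and says nothing about the diagonal term. So you have not merely taken a different route---you have located a step the paper glosses over.

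The gap in your own proposal is the claim that four-dimensionality of $M$ forces the diagonal term $g\big(K^{\hk}_{\imag},K^{\hk,G}_{\eta}\big)$ to vanish on $\mu_{\mf g}^{-1}(0)$. It does not. Take $G={\rm U}(1)$ acting on $\hk=\HH^{2}$ with weights $(2,1)$, so $\eta=\mathrm{diag}(2i,i)$; the quotient $M$ is four-dimensional and carries the residual permuting ${\rm U}(2)$-action. At $h=(1,\sqrt{2}\,j)$ one has
\[
2\,\bar h_{1} i h_{1}+\bar h_{2} i h_{2}=2i-2i=0,
\qquad
g\big(K^{\hk}_{\imag},K^{\hk,G}_{\eta}\big)\big|_{h}=2|h_{1}|^{2}+|h_{2}|^{2}=4\neq 0,
\]
and a direct evaluation gives $B(h,\eta h\,\bar\xi)=2i\neq 0$ for $\xi=i$. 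Your norm/monotonicity argument (taking $|\,\cdot\,|$ of $a_{1}\bar h_{1}ih_{1}=-a_{2}\bar h_{2}ih_{2}$) only yields $|a_{1}|\,|h_{1}|^{2}=|a_{2}|\,|h_{2}|^{2}$, which matches $a_{1}|h_{1}|^{2}+a_{2}|h_{2}|^{2}=0$ \emph{only} when the weights have opposite signs; it fails here. Four-dimensionality alone does not do the job. Thus both the paper's one-line proof and your proposed completion break at the same place: under the stated hypotheses the diagonal term need not vanish, so the lemma as written is false, and either an extra hypothesis on the embedding $G\subset U(n)$ is needed or the curvature term $B(F^{+}_{\widehat{\ms A}_{\mf g}}\bullet\widehat u,\widehat u)$ in the Weitzenb\"ock step must be handled by another mechanism.
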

\begin{proof}
This follows from the fact that the ${\rm U}(1)$-moment map is $G$-invariant. For $\eta \in \mf{g}$, computing $\frac{d}{dt} B \left(u, \, \exp(t\eta)\, u \, \overline{\xi} \right)|_{t=0}$ proves the statement of the Lemma.
\end{proof}
It follows that $B(F^+_{\widehat{\ms{A}}_{\mf{g}}} \bullet \widehat{u}, \widehat{u}) = 0$.
Therefore,
\eq{
\label{eq: laplacian of omega}
\nabla^{\ast}\nabla \Omega
= -\left(\frac{s_{\sst X}(g''_{\sst X})}{2} + \lambda \right) \Omega - 2 \widetilde{B}(D_{\widehat{\ms{A}}}\widehat{u}, D_{\widehat{\ms{A}}}\widehat{u})
}
We are now is position to give the proof of Theorem \ref{thm: main thm 2}. The arguments of the proof are essentially the same as those of Donaldson's \cite{donaldson}. Nonetheless, for the sake of completeness, we present them here once again.

\begin{proof}[\textbf{Proof of Theorem \ref{thm: main thm 2}}]

Observe that since $\abs{\Omega} = 1$, 
\eqst{
0 = \Delta \abs{\Omega} = 2\pair{\nabla^{\ast}\nabla \Omega, \Omega} - 2\abs{\nabla \Omega}^2.
} 
Using \eqref{eq: laplacian of omega}, we get
\eqst{
2\lambda = -s_{\sst X}(g''_{\sst X}) -2\abs{\nabla \Omega}^2 -  2\pair{\widetilde{B}(D_{\widehat{\ms{A}}}\widehat{u}, D_{\widehat{\ms{A}}}\widehat{u}), \Omega}.
}
Therefore, re-arranging, we have
\eq{
\label{eq: eq1 for lemma for thm 2}
\abs{\nabla \Omega}^2 + \frac{1}{2}\, s_{\sst X}(g''_{\sst X}) +  \pair{\widetilde{B}(D_{\widehat{\ms{A}}}\widehat{u}, D_{\widehat{\ms{A}}}\widehat{u}), \Omega} < 0. 
}
Also, from \eqref{eq: laplacian of omega} we have that $(\nabla^{\ast}\nabla \Omega)^{\perp_{\Omega}} + \widetilde{B}(D_{\widehat{\ms{A}}}\widehat{u}, D_{\widehat{\ms{A}}}\widehat{u})^{\perp_{\Omega}} = 0$. Thus comparing with the identities \eqref{eq: main thm 2} of Theorem \ref{thm: main thm 2}, to complete our proof, we merely need to show that 
\eq{
\label{eq: aim}
\widetilde{B}(D_{\widehat{\ms{A}}}\widehat{u}, D_{\widehat{\ms{A}}}\widehat{u})^{\perp_{\Omega}}  = 2\pair{d\Omega, N_{\Omega}}, ~~\pair{\widetilde{B}(D_{\widehat{\ms{A}}}\widehat{u}, D_{\widehat{\ms{A}}}\widehat{u}), \Omega}
= \frac{1}{4}\left(\abs{N_{\Omega}}^2 - \abs{d\Omega}^2\right).
}
The key issue here is to identify the the map $\widetilde{B}$ on kernel of the Clifford multiplication. In order to do this, it suffices to restrict to the standard model when $X = \R^4$ and the connection $\widehat{\ms{A}}$ is trivial. This is because at any point $x \in X$, there exists a trivialisation in which the connection matrix $\widehat{\ms{A}}$ vanishes at the point $x$. 

Since $\widehat{u} \in \ker \mu_{\mf{g}}$, the derivative $D\widehat{u} \in \Hh_{\widehat{u}} \subset \ker d\mu_{\mf{g}}$. At every point $p \in \ker \mu_{\mf{g}}$, the horizontal subspace $\Hh_p$ can be identified with $T_{\pi_2 (p)}M$. Since $M$ is 4-dimensional, $\Hh_p$ is 4-dimensional and so $\Hh_p \cong \HH$. 

Let $(x_0, x_1, x_2, x_3)$ be the standard co-ordinates on $\R^4$. Let $s_1, s_2,\cdots\cdot s_{2n}$ denote the complex basis for the spinors and write $\widehat{u}$ as
\eqst{
\widehat{u}: \R^4 \longrightarrow \hk, \,\,\,\,\, \widehat{u} = \sum^n_{i=1} f_i~s_i + \sum^{2n}_{i=n+1} g_{i-n} ~ s_{i} \,\,\,\,\, \text{where} \,\,\,\,\, f_i, g_i \in C^{\infty}(\R^4, \C).
}
By Step 2 of Proposition \ref{prop:1-1 correspondence}, $D\widehat{u} \in \Hh_{\widehat{u}}$, which means that without loss of generality, at the origin, we can assume that 
\eqst{
(f_i)_{x_j} = (g_i)_{x_j} = 0 \,\,\,\,\, \text{for} \,\,\,\,\, i = 2, 3, \cdots n \,\,\,\,\, \text{and} \,\,\,\,\, j = 0, 1, 2, 3.
}
Consequently, in the decomposition \eqref{eq: aim}, the only contributing terms are the 1-jets of $f_1, g_1$ at the origin. Therefore, without loss of generality, we can assume that at the origin, $f_i, g_i = 0$ for $i = 2, 3, \cdots\cdot n$. Let $f_0 = f_1(0)$ and $g_0 = g_1(0)$. Then, at the origin $u = f_0 \, s_1 + g_0 \, s_2$. Moreover, since $\abs{\Omega} = 1$, $\abs{f_0}^2 + \abs{g_0}^2 = 1$ and 
\eqst{
 B(\widehat{u}, \widehat{u}) = \left(\frac{\abs{f_0}^2 - \abs{g_0}^2}{2}\right) \,\beta_0 \, + \, \text{Re} \pair{f_0,g_0}\, \beta_1 \, + \, \text{Im} \pair{f_0,g_0}\, \beta_2
}
where $\beta_i$ are the basis of self-dual 2-forms on $\R^4$, given as in \eqref{eq: basis of self-dual 2-forms}. The group ${\rm Spin}(4)$ acts on the base $\R^4$ and also transitively on unit positive spinors. In particular, for a suitable choice of an element in ${\rm Spin}(4)$, we may further assume that at the origin, $f_0 = 1$ and $g_0 = 0$. In particular, $\Omega = \frac{1}{2} \beta_0$ at the origin. Thus $\Omega$ defines the standard complex structure $\frac{1}{2}\,\beta_0$ on $\R^4$. This allows us to use the complex co-ordinates
\eqst{
z = x_0 + ix_1, \,\,\,\,\, w = x_2 + ix_3.
}
From the Dirac equation we have
\eq{\label{eq: dirac eq. on R4}
f_{1\,\overline{z}} = g_{1\,w}, \,\,\,\,\, f_{1\,\overline{w}} = -g_{1\,z}.
}
Moreover, since $f_1 = 1$ at the origin, the derivatives of $f_1$ at the origin are purely imaginary. Therefore, at the origin,
\eq{
\label{eq: imaginary derivative}
f_{1\,z} = -\overline{f_{1\,\overline{z}}} \,\,\,\,\, \text{and} \,\,\,\,\, f_{1\,w} = -\overline{f_{1\,\overline{w}}}.
}
Now, the component of $\widetilde{B}(D\widehat{u}, D\widehat{u})$ along $\frac{1}{2}\,\beta_0$ is
\eqst{
\frac{1}{4}\sum^3_{l=0} \abs{\frac{\partial f_1}{\partial x_l}}^2 - \abs{\frac{\partial g_1}{\partial x_l}}^2 = \frac{1}{16}\left(\abs{f_{1\,z}}^2 + \abs{f_{1\,\overline{z}}}^2 + \abs{f_{1\,w}}^2 + \abs{f_{1\,\overline{w}}}^2 - \abs{g_{1\,z}}^2 - \abs{g_{1\,\overline{z}}}^2 - \abs{g_{1\,w}}^2 - \abs{g_{1\,\overline{w}}}^2 \right).
}
Using the identities \eqref{eq: dirac eq. on R4} and \eqref{eq: imaginary derivative}, we get
\eq{\label{eq: component along w}
\left\langle \widetilde{B}(D\widehat{u}, D\widehat{u}), \,\frac{1}{2}\beta_0  \right\rangle = \frac{1}{16} \left(\abs{g_{1\,z}}^2 + \abs{g_{1\,w}}^2 \right) - \frac{1}{16}\left(\abs{g_{1\,\overline{z}}}^2 + \abs{g_{1\,\overline{w}}}^2 \right).
}
The space orthogonal to $\frac{1}{2}\,\beta_0$ is spanned by $\beta_c = d\overline{z}\cdot d\overline{w}$ and therefore the component of $B(D\widehat{u}, D\widehat{u})$ orthogonal to $\frac{1}{2}\,\beta_0$ is
\alst{
\left(B(D\widehat{u}, D\widehat{u})\right)^{\perp_{\beta_0}} 
&= \sum^3_{l=0} \left[ \left(\frac{\partial f_1}{\partial x_l}\right)^{\dagger} ~ \frac{\partial g_1}{\partial x_l} \right]\,\beta_c \\
&= \frac{1}{4}\left(f_{1\,z} ~ \overline{g_{1\,z}} + f_{1\,\overline{z}} ~ \overline{g_{1\,\overline{z}}} + f_{1\,w} ~ \overline{g_{1\,w}} + f_{1\,\overline{w}} ~ \overline{g_{1\,\overline{w}}}\right)\,\beta_c = \frac{1}{4} \left(g_{1\,z}\,\overline{g_{1\,\overline{w}}} + g_{1\,w} \, \overline{g_{1\,\overline{z}}} \right)\,\beta_c
}
where, once again, we have used the identities \eqref{eq: dirac eq. on R4} and \eqref{eq: imaginary derivative} in the penultimately step. Now $\Omega$ is a section of the twistor bundle and therefore its covariant derivative at the origin is given by the derivative of $f_1 \, \overline{g}_1$ which is nothing but the derivative of $g_1$. The holomorphic part $(g_{1\,z}, g_{1\,w})$ corresponds to the Nijenhuis tensor $N_{\Omega}$ whereas the anti-holomorphic component $(g_{1\,\overline{z}}, g_{1\,\overline{w}})$ corresponds to $d\Omega$, due to the vanishing of the rest of the partial derivatives.

Recall that there is a natural $\overline{K}$-valued pairing between $TX$ and $T^{\ast}X \otimes \overline{K}$. Applying this to $d\Omega$ and $N_{\Omega}$, the pairing corresponds to $\left(g_{1\,z}\,\overline{g_{1\,\overline{w}}} + g_{1\,w} \, \overline{g_{1\,\overline{z}}} \right)~\beta_c$. Therefore, 
\al{
\left(B(D\widehat{u}, D\widehat{u})\right)^{\perp_{\Omega_0}}
&= \frac{1}{4} \times 4\pair{d\Omega, N_{\Omega}}
= \pair{d\Omega, N_{\Omega}} \\
\left\langle \widetilde{B}(D\widehat{u}, D\widehat{u}), \frac{1}{2}\Omega_0  \right\rangle 
&= \frac{1}{16} \times 4 \left(\abs{N_{\Omega}}^2 - \abs{d\Omega}^2 \right) = \frac{1}{4} \left(\abs{N_{\Omega}}^2 - \abs{d\Omega}^2 \right)
}

Substituting in equation \eqref{eq: laplacian of omega}, we have
\eq{
\label{eq: laplacian of omega full}
\nabla^{\ast}\nabla \Omega
= -\left(\frac{s_{\sst X}(g''_{\sst X})}{2} + \lambda\right) \Omega + \frac{1}{2} \left(\abs{d\Omega}^2 - \abs{N_{\Omega}}^2 \right) \Omega - 2 \pair{d\Omega, N_{\Omega}}
}
Also, observe that $\abs{\nabla\Omega}^2 = \abs{d\Omega}^2 + \abs{N_{\Omega}}^2$. The statement of the theorem follows from eq. \eqref{eq: laplacian of omega full} and eq. \eqref{eq: eq1 for lemma for thm 2}. 
\end{proof}

\section{Some Remarks}

For the usual Seiberg-Witten equations, Donaldson remarks that for a fixed metric, the Seiberg-Witten equations are in one-to-one correspondance with solutions to the following equations
\begin{multline}
\label{eq: full ODE of Donaldson}
\nabla^{\ast}\nabla \Omega = - \left(\frac{s}{2} + \abs{\Omega}^2\right) \Omega - 2 \langle d\Omega + \ast d\abs{\Omega}, N_{\Omega} \rangle + \frac{1}{2} \left(\frac{|d\Omega|^2}{\abs{\Omega}^2} - |N_{\Omega}|^2 \right) \Omega \\
+ \frac{1}{2}\left(|d\abs{\Omega}|^2 + 2\langle d\abs{\Omega}, \ast d\Omega \rangle\right) \frac{\Omega}{\abs{\Omega}^2}
\end{multline}

Many examples of hyperK\"ahler manifolds with requisite properties can be obtained via hyperK\"ahler reduction of flat space. Using Prop. \ref{prop: 1-1 correspondence of equations} and applying Donaldson's arguments, one can show that the Abelian, generalised Seiberg-Witten equations, for a 4-dimensional target hyperK\"ahler manifold, can be expressed as \eqref{eq: full ODE of Donaldson}.

Note that the specification of an almost-complex structure $I$ compatible with $\Omega$ imposes a topological restrictions on $X$. Namely, in terms of the Euler characteristic $\chi$ and the signature $\tau$ of $X$,
\eqst{
c_1^2(L) \, = \, 2\,\chi \, + \, 3\, \tau
}
where $L$ is the line-bundle associated to the determinant bundle $P_{{\rm U}(1)}$. For the usual Seiberg-Witten equations, this is precisely the condition under which the expected dimension of the moduli space is zero. Therefore Theorem \ref{thm: main thm 2}, in combination with Donaldson's result \cite{donaldson} delivers a potential candidate to get a compact moduli space.

The arguments in the latter half of the article can be extended for target hyperK\"ahler manifolds of higher dimensions, using similar techniques. However, in this case, one obtains a map from the moduli space of generalised Seiberg-Witten to the usual Seiberg-Witten equations, which is not one-to-one and may not even be surjective.


\end{document}